\theoremstyle{plain}
\newtheorem{theorem}{Theorem}[section]
\newtheorem{corollary}[theorem]{Corollary}
\newtheorem{lemma}[theorem]{Lemma}
\newtheorem{proposition}[theorem]{Proposition}
\theoremstyle{definition}
\newtheorem{remark}[theorem]{Remark}
\newtheorem*{remark*}{Remark}
\def\!#1{#1^\s}
\def\ker#1{\mathrm{ker}(#1)}
\def\aut#1{\mathrm{Aut}(#1)}
\def\End#1{\mathrm{End}(#1)}
\def\aff#1{\mathrm{Aff}#1}
\def\lmlt{\mathrm{LMlt}}
\def\dis{\mathrm{Dis}}
\def\sym{\mathrm{Sym}}
\newcommand{\BlocS}[2]{\dis(#1)_{[#2]}}
\def\s{\mathfrak{s}}
\def\comment#1{{\color{red} #1}}
\def\setof#1#2{\{#1\, : \,#2\}}
\def\N{\mathrm{Norm}}
\def\c#1{\mathrm{con}_{#1}}
\def\cg#1{\equiv_\alpha}
\newcommand*\xbar[1]{%
   \hbox{%
     \vbox{%
       \hrule height 0.5pt 
       \kern0.5ex
       \hbox{%
         \kern-0.1em
         \ensuremath{#1}%
         \kern-0.1em
       }%
     }%
   }%
}
\title{Nilpotent left quasigroups}
\author{Marco Bonatto}
\address[M. Bonatto]{Dipartimento di matematica e informatica - UNIFE}
\email{marco.bonatto.87@gmail.com}
\begin{document}

\begin{abstract}
In this paper we investigate central congruence of left quasigroups in the sense of Freese and McKenzie \cite{comm} and we extend some known results for quandles. In particular, we can extend the characterization of finite nilpotent latin quandles and the characterization of distributive varieties of quandles to the setting of idempotent left quasigroups.
\end{abstract}

\maketitle
\section{Introduction}

%

Left quasigroups are binary algebraic structures with a very combinatorial flavour. Many algebraic structures of interest have an underlying left quasigroup structure (e.g. racks and quandles \cite{AG,J, Matveev}, cycle sets \cite{Rump1} etc) and therefore it is worth to study such structures as a common ground. In this paper we basically show that many results stated for quandles in \cite{Principal, Super, Maltsev_paper} can be extendend to (idempotent) left quasigroups with a focus on {\it nilpotent} left quasigroups in the sense of the commutator theory developed in \cite{Smith, comm}. This theory generalized the well-known concepts of {\it abeliannes, solvability} and {\it nilpotency} in group theory to arbitrary algebraic structures and it has been specialized to the setting of racks and quandles in \cite{CP}.


One of the goals of the paper is to remark the interplay between the lattice of congruences and the lattice of {\it admissible subgroups} defined in \cite{LTT}. The two lattices are related by a Galois connection (see \cite[Theorem 1.10]{LTT}) that can be used to transfer information from one lattice to another. 

For racks, the admissible subgroups encode many information, including also the property of abelianness and centrality of congruences. In particular, such properties of congruences are reflected by the properties of the {\it relative displacement groups} \cite[Lemma 5.1, Proposition 5.2]{CP}. The structure of the displacement group is also effected by the properties of congruences: for instance a rack is solvable (resp. nilpotent) if and only if its displacement group is solvable (resp. nilpotent) \cite[Lemma 6.1, Lemma 6.2]{CP}. Moreover, we have a prime-power decomposition theorem for quandle \cite[Section 6.2]{CP} and in particular connected racks of prime power order are nilpotent. For left quasigroups the relation between congruences and admissible subgroups is not so tight, nevertheless we show that some weaker results hold in this direction as Corollary \ref{solvable} and Lemma \ref{divisors}.

Central congruences and {\it semiregular} admissible subgroups are closely related. In particular, some properties of central congruences are actually determined by the property of being semiregular for the action of the displacement group on every block of the congruence. Thus, we investigate congruences obtained from semiregular admissible subgroups through the Galois connection (under some further mild assumptions every central congruence arise in this way) and semiregular left quasigroups. These property are determined by a family of the equivalence relations already introduced for semimedial left quasigroups in \cite{LTT}. We also obtain that semiregular quandles are the building blocks of idempotent left quasigroups (see Proposition \ref{semiregular decomposition}). Moreover, abelian left quasigroups (in the sense of \cite{comm}) are semiregular and the idempotent ones are actually quandles (see Corollary \ref{corollary for abelian}(ii)).

The results collected in the first part of the paper can be used to investigate Malt'sev varieties of idempotent left quasigroups. In particular, Theorem \ref{nilpotent are latin}, Proposition \ref{nilpotent latin}, Proposition \ref{maltsev semiregular} and Corollary \ref{distributive} are direct generalization of known results for quandles \cite{Maltsev_paper}.

The paper is organized as follows: in Section \ref{preliminary} we recall some basic facts about left quasigroups, congruences and admissible subgroups and the {\it Cayley kernel}. In Section \ref{Semiregular section} we define and study semiregular left quasigroups and semiregular admissible subgroups. In Section \ref{nilpotent section} we recall some basic facts about commutator theory and we explore central congruences and central extensions of left quasigroups. In the last Section we turn out attention to Malt'sev varieties of left quasigroups and we extend some results from \cite{Maltsev_paper}.

\subsection{Notation and terminology}
An {\it algebraic structure} is given by a set $A$ with an arbitrary set of basic operations $F$. A {\it term} on $A$ is either a variable or an expression $f(t_1,\ldots,t_{n})$, where $t_1,\ldots,t_{n}$ are terms on $A$ and $f$ is a basic $n$-ary operation of $A$. A term $t$ is {\it idempotent} if $t(x,\ldots,x)\approx x$ holds. An algebra $A$ is {\it idempotent} if every term on $A$ is idempotent.

Let $\alpha$ be an equivalence relation on a set $A$. We denote the class of $x$ with respect to $\alpha$ by $[x]_\alpha$ (or simply by $[x]$ in some cases). An equivalence relation $\alpha$ on $A$ is a {\it congruence} if it is compatible with the algebraic structure. Namely, if $f$ is an $n$-ary basic operation then
$$f(x_1,x_2,\ldots, x_n)\,\alpha \, f(y_1,y_2,\ldots, y_n)$$
provided $x_i\, \alpha\, y_i$ for $i=1,\ldots, n$. 
The set of congruences of $Q$ is a lattice denoted by $Con(A)$ with top element $1_A=A\times A$ and bottom element $0_A=\setof{(x,x)}{x\in A}$. 
Congruences and morphisms are essentially the same thing. Indeed if $h:A\longrightarrow A'$ is a morphism then $\ker{h}=\setof{(x,y)\in A}{h(x)=h(y)}$ is a congruence of $A$. On the other hand if $\alpha$ is a congruence, the factor set is endowed with a natural algebraic structure: indeed for every basic $n$-ary operation $f$ on $A$, the map given by
$$f([x_1]_\alpha,\ldots,[x_n]_\alpha)=[f(x_1,\ldots,x_n)]_\alpha$$
for every $x_1,\ldots, x_n\in A$ is a well-defined $n$-ary operation on $A/\alpha$. The canonical map 
$$A\longrightarrow A/\alpha,\quad x\mapsto [x]_\alpha$$
is a morphism with respect to such structure. 

The congruences of $A/\alpha$ are $\setof{\beta/\alpha}{\alpha\leq\beta\in Con(A)$} where $\beta/\alpha$ is defined by setting
$$[x]_\alpha\, \beta/\alpha\, [y]_\alpha\, \text{ if and only if } x\,\beta\, y$$
for every $x,y\in A$. Note that if $A$ is idempotent then the blocks of congruences of $A$ are subalgebras. 


A variety is a class of algebraic structures closed under subalgebras, homomorphic images and direct products. We denote by $\mathcal{S}(A)$ the set of subalgebras of $A$.

 A {\it Malt'sev term} is a ternary (idempotent) term $m$ satisfying the identities
$$m(x,y,y)\approx m(y,y,x)\approx x.$$
We say that an algebraic structure $A$ has a Malt'sev term if the variety generated by $A$ has a Malt'sev term.

Let $\mathcal{K}$ be a class of idempotent algebraic structures. We say that $\mathcal{K}$ is {\it closed under extensions} if $A\in \mathcal{K}$ provided that $A/\alpha\in \mathcal{K}$ and $[x]_\alpha\in \mathcal{K}$ for every $x\in A$ for some $\alpha\in Con(A)$.

Let us recall some group theoretical terminology. Let $G$ be a group acting on a set $Q$ and $x\in Q$. We denote the {\it pointwise stabilizer} of $x$ by $G_x$ and the orbit of $x$ under the action of $G$ by $x^G$. The group $G$ is {\it semiregular} if the pointwise stabilizers of the action of $G$ are trivial and {\it transitive} if $Q=x^G$ for every $x\in Q$. If $G$ is semiregular and transitive on $Q$, we say that $G$ is {\it regular} on $Q$.

\section{Preliminary results}\label{preliminary}

\subsection{Left quasigroups}

A left quasigroup is a binary algebraic structure $(Q,\cdot,\backslash)$  that satisfies the following identities
$$x\cdot(x\backslash y)\approx y\approx x\backslash (x\cdot y).$$
We define the {\it left and right multiplication mapping} as
$$L_x:y\mapsto x\cdot y,\quad R_x:y\mapsto y\cdot x$$
for every $x\in Q$. The map $L_x$ is a permutation for every $x\in Q$ (note that $x\backslash y=L_x^{-1}(y)$ for every $x,y\in Q$) and so we can introduce the {\it left multiplication group} as $\lmlt(Q)=\langle L_x,\, x\in Q\rangle$. In the following we will denote the $\cdot$ operation just by juxtaposition. We define the {\it set of idempotent elements of $Q$} as $E(Q)=\setof{x\in Q}{xx=x}$. Note that if $[x]_\alpha\in E(Q/\alpha)$ then the block of $x$ with respect to $\alpha$ is a subalgebra of $Q$. We say that $Q$ is:

%

\begin{itemize}
\item[(i)] {\it idempotent} if $Q=E(Q)$, i.e. the identity $xx\approx x$ holds.
\item[(ii)] {\it Projection} if the identity $xy\approx y$ holds. We denote by $\mathcal{P}_n$ the projection left quasigroups with $n$ elements.
\item[(iii)] A {\it rack} if the identity $x(yz)\approx (xy)(xz)$ holds (or equivalently $L_x\in \aut{Q}$ for every $x\in Q$). Idempotent racks are called {\it quandles}.
\item[(iv)] {\it Latin} if the right multiplications are bijective. In this case a binary operation can be defined as $x/y=R_y^{-1}(x)$ for $x,y\in Q$. If $Q$ is infinite, the universal algebraic features (e.g. congruences, subalgebras etc) of the associated {\it quasigroup} structure $(Q,\cdot,\backslash, /)$ might be different from the one of the left quasigroup $(Q,\cdot,\backslash)$. 
\end{itemize}

\subsection{Congruences and subgroups}

Let $Q$ be a left quasigroup and $\alpha$ be a congruence of $Q$. Then we have a canonical surjective group homomorphism, defined on generators as
\begin{align}\label{pialpha}
\pi_\alpha:\lmlt(Q)\longrightarrow \lmlt(Q/\alpha),\quad L_x\mapsto L_{[x]_\alpha}.
\end{align}
In particular, we have that $\pi_\alpha(h)([x]_\alpha)=[h(x)]_\alpha$ for every $x\in Q$ and every $h\in\lmlt(Q)$. The kernel of the map \eqref{pialpha} is denoted by $\lmlt^\alpha$. We also define the {\it displacement group relative to $\alpha$} as
\begin{align}\label{disalpha}
\dis_{\alpha}=\langle hL_x L_y^{-1} h^{-1}, \, x\,\alpha\, y,\, h\in \lmlt(Q)\rangle,
\end{align}
namely the normal closure of the set $\setof{L_x L_y^{-1}}{x\, \alpha\, y}$ in $\lmlt(Q)$. In particular we denote by $\dis(Q)$ the displacement group relative to $1_Q$ and by $\dis^\alpha=\dis(Q)\cap\lmlt^\alpha$. We also define the {\it blockwise stabilizer of $x\in Q$} as $\dis(Q)_{[x]_\alpha}=\pi_{\alpha}^{-1}(\dis(Q/\alpha)_{[x]})=\setof{h\in \dis(Q)}{h(x)\,\alpha\, x}$. In particular note that $\dis(Q)_x  \leq \dis(Q)_{[x]_\alpha}$ for every $x\in Q$ and
\begin{align}\label{eq for dis}
\dis_\alpha\leq \dis^\alpha&=\bigcap_{[x]\in Q/\alpha} \dis(Q)_{[x]_\alpha}=\setof{h\in \dis(Q)}{h(x)\, \alpha\, x \text{ for every } x\in Q}. 
\end{align}

The subgroups $\dis_\alpha$ and $\dis^\alpha$ can be defined in the very same way as in \eqref{disalpha} and as in \eqref{eq for dis} for every binary relation $\alpha$ on $Q$. Therefore we have two operators $\dis_*$ and $\dis^*$ from the equivalence relations on $Q$ to the subgroups of $\dis(Q)$ (we can also define the operator $\lmlt^*:\alpha\longrightarrow \lmlt^\alpha\subseteq \lmlt(Q)$).

\begin{lemma}\label{pi}	\cite[Proposition 3.2]{CP}
Let $Q$ be a left quasigroup and $\alpha,\beta\in Con(Q)$ such that $\alpha\leq \beta$. Then: 
$$\dis_{\beta/\alpha}=\pi_{\alpha}(\dis_\beta),\quad \dis^{\beta/\alpha}=\pi_{\alpha}(\dis^\beta),\quad \lmlt^{\beta/\alpha}=\pi_{\alpha}(\lmlt^\beta).$$
\end{lemma}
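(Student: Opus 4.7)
The plan is to exploit two facts that are either stated or immediate from the excerpt: first, that $\pi_\alpha$ is a surjective group homomorphism (since it is surjective on the generating set $\{L_x\}_{x \in Q}$); and second, the identity $\pi_\alpha(h)([x]_\alpha) = [h(x)]_\alpha$. All three equalities will then reduce to standard bookkeeping on generators, normal closures and preimages.

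I would first handle $\dis_{\beta/\alpha} = \pi_\alpha(\dis_\beta)$. A surjective group homomorphism sends the normal closure of a subset to the normal closure of its image, so it suffices to observe that $\pi_\alpha(L_x L_y^{-1}) = L_{[x]_\alpha} L_{[y]_\alpha}^{-1}$ and, using $\alpha \leq \beta$, that $x\,\beta\, y$ if and only if $[x]_\alpha\,(\beta/\alpha)\,[y]_\alpha$. Thus $\pi_\alpha$ carries the generating set of $\dis_\beta$ onto the generating set of $\dis_{\beta/\alpha}$. Specializing to $\beta = 1_Q$ yields the auxiliary identity $\pi_\alpha(\dis(Q)) = \dis(Q/\alpha)$, which I will invoke below.

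Next, for $\dis^{\beta/\alpha} = \pi_\alpha(\dis^\beta)$, I would argue by double inclusion. The forward inclusion is immediate from \eqref{eq for dis}: if $h \in \dis^\beta$, then $h \in \dis(Q)$ and $h(x)\,\beta\, x$ for all $x$, so $\pi_\alpha(h) \in \dis(Q/\alpha)$ and $\pi_\alpha(h)([x]_\alpha) = [h(x)]_\alpha\,(\beta/\alpha)\,[x]_\alpha$. For the reverse, given $g \in \dis^{\beta/\alpha} \subseteq \dis(Q/\alpha)$, the auxiliary identity from the previous step produces $h \in \dis(Q)$ with $\pi_\alpha(h) = g$; the condition $g([x]_\alpha)\,(\beta/\alpha)\,[x]_\alpha$ then unfolds to $h(x)\,\beta\, x$ for every $x$, placing $h$ in $\dis^\beta$.

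Finally, for $\lmlt^{\beta/\alpha} = \pi_\alpha(\lmlt^\beta)$, I would invoke the natural isomorphism of left quasigroups $(Q/\alpha)/(\beta/\alpha) \cong Q/\beta$, which, via the functoriality of $\lmlt$, makes the factorization $\pi_\beta = \pi_{\beta/\alpha} \circ \pi_\alpha$ hold at the level of multiplication groups (both sides send $L_x$ to $L_{[x]_\beta}$). Then $\lmlt^\beta = \ker \pi_\beta = \pi_\alpha^{-1}(\ker \pi_{\beta/\alpha}) = \pi_\alpha^{-1}(\lmlt^{\beta/\alpha})$, and surjectivity of $\pi_\alpha$ closes the argument. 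The only step that requires genuine care is the preimage argument in the second part: it is essential that the preimage $h$ be chosen inside $\dis(Q)$ rather than merely in $\lmlt(Q)$, and this is exactly what the specialization $\beta = 1_Q$ of the first equality supplies.
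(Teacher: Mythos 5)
Your proof is correct; all three equalities are handled soundly, including the one genuinely delicate point (lifting an element of $\dis^{\beta/\alpha}$ to a preimage inside $\dis(Q)$ rather than merely in $\lmlt(Q)$, which the specialization $\pi_\alpha(\dis(Q))=\dis(Q/\alpha)$ of the first equality supplies). The paper itself only cites \cite[Proposition 3.2]{CP} without reproducing an argument, and what you wrote is the standard proof that result rests on, so there is nothing further to compare.
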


In particular, the map \eqref{pialpha} restricts and corestricts to $\dis(Q)$ and $\dis(Q/\alpha)$ and the kernel of the restricted map is $\dis^\alpha$.

\begin{proposition}\label{p:dis_alpha1}
Let $Q$ be a left quasigroup and $\setof{\alpha_i}{i\in I}\subseteq Con(Q)$, $\beta=\bigwedge_{i\in I} \alpha_i$ and $\gamma=\bigvee_{i\in I} \alpha_i$. Then:
\begin{enumerate}
\item[(i)] $\dis(Q)_{[x]_\beta} = \bigcap_{i\in I} \dis(Q)_{[x]_{\alpha_i}}$ for every $x\in Q$.
	\item[(ii)] $\dis^{\beta} = \bigcap_{i\in I} \dis^{\alpha_i}$. 
	\item[(iii)]  $\dis_{\gamma}  =  \langle \dis_{\alpha_i},\, i\in I\rangle$. 
\end{enumerate}
\end{proposition}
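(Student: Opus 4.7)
Parts (i) and (ii) are essentially direct unpackings of the definitions. For (i), note that by the characterization $\dis(Q)_{[x]_\alpha}=\setof{h\in \dis(Q)}{h(x)\,\alpha\, x}$, an element $h\in\dis(Q)$ lies in $\dis(Q)_{[x]_\beta}$ iff $h(x)\,\beta\,x$. Since $\beta=\bigwedge_i\alpha_i$ is, as a relation, the intersection $\bigcap_i\alpha_i$, this is equivalent to $h(x)\,\alpha_i\,x$ for every $i$, i.e.\ $h\in\bigcap_i\dis(Q)_{[x]_{\alpha_i}}$. Part (ii) follows in exactly the same fashion from the second equality in \eqref{eq for dis}: $h\in\dis^\beta$ iff $h(x)\,\beta\,x$ for every $x\in Q$, iff $h(x)\,\alpha_i\,x$ for every $x\in Q$ and every $i\in I$, iff $h\in\bigcap_i\dis^{\alpha_i}$. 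No further work seems to be required.

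For (iii), one inclusion is immediate: each $\alpha_i\leq\gamma$ yields $\dis_{\alpha_i}\leq\dis_\gamma$ (since the generators defining $\dis_{\alpha_i}$ are among those defining $\dis_\gamma$), hence $\langle \dis_{\alpha_i},\,i\in I\rangle\leq\dis_\gamma$. The reverse inclusion is the step that requires an idea. Since each $\dis_{\alpha_i}$ is normal in $\lmlt(Q)$ (being the normal closure of a set in $\lmlt(Q)$), the subgroup $N:=\langle \dis_{\alpha_i},\,i\in I\rangle$ is normal in $\lmlt(Q)$ as well. It therefore suffices to show that every basic generator $L_xL_y^{-1}$ with $x\,\gamma\,y$ lies in $N$.

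The plan is to exploit the standard fact that the join $\gamma$ of congruences, viewed as a relation, is the transitive closure of the union $\bigcup_i\alpha_i$. Thus if $x\,\gamma\,y$ there exist a chain $x=z_0,z_1,\ldots,z_n=y$ with $z_{k-1}\,\alpha_{i_k}\,z_k$ for suitable indices $i_k\in I$. We then write the telescoping product
$$L_xL_y^{-1}=(L_{z_0}L_{z_1}^{-1})(L_{z_1}L_{z_2}^{-1})\cdots (L_{z_{n-1}}L_{z_n}^{-1}),$$
in which each factor belongs to $\dis_{\alpha_{i_k}}\subseteq N$. Conjugation by an arbitrary $h\in\lmlt(Q)$ preserves membership in $N$ by normality, so the remaining generators $hL_xL_y^{-1}h^{-1}$ of $\dis_\gamma$ also lie in $N$. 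This yields $\dis_\gamma\leq N$ and completes the proof.

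The only mildly delicate point is the one just highlighted, namely invoking the description of joins of congruences as transitive closures of the union together with the normality of each $\dis_{\alpha_i}$; the rest is bookkeeping.
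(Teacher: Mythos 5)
Your proof is correct and follows essentially the same route as the paper: parts (i) and (ii) by unwinding the definitions and using that the meet of congruences is their intersection, and part (iii) by the telescoping product along a chain of witnesses for the join as a transitive closure. The only (welcome) difference is that you make explicit the normality argument needed to absorb the conjugated generators $hL_xL_y^{-1}h^{-1}$, which the paper leaves implicit.
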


\begin{proof}

(i) We have that
\begin{align}\label{eq for blockstab}
\dis(Q)_{[x]_\beta}&=\setof{h\in \dis(Q)}{h(x)\, \beta\, x}\\
&=\setof{h\in \dis(Q)}{h(x)\, \alpha_i\, x \text{ for all } i\in I}=\bigcap_{i\in I} \setof{h\in \dis(Q)}{h(x)\, \alpha_i\, x  }\notag \\
&=\bigcap_{i\in I} \dis(Q)_{[x]_{\alpha_i}}.\notag
\end{align}

(ii) According to \eqref{eq for dis} and \eqref{eq for blockstab}, we have $\dis^\beta=\bigcap_{i\in I} \dis^{\alpha_i}$.

(ii) It is easy to see that $\dis_{\alpha_i}\leq \dis_{\gamma}$, and thus $\langle \dis_{\alpha_i},\, i\in I\rangle\leq \dis_{\gamma}$.  
For the other inclusion, let $x\,\gamma\,y$, and take the witnesses $x=z_1,\dots,z_n=y$ such that $z_k\,\alpha_{j_k}\,z_{k+1}$, for some $j_k\in I$ and every every $k$. Then
\[ L_x L_y^{-1}=L_{z_1} L_{z_n}^{-1}=\underbrace{L_{z_1}L_{z_2}^{-1}}_{\in\dis_{\alpha_{j_1}}}\underbrace{L_{z_2}L_{z_3}^{-1}}_{\in\dis_{\alpha_{j_2}}}\underbrace{L_{z_3}L_{z_4}^{-1}}_{\in\dis_{\alpha_{j_3}}}\cdots\underbrace{L_{z_{n-1}}L_{z_n}^{-1}}_{\in\dis_{\alpha_{j_{n-1}}}}\]
and thus every generator $fL_x L_y^{-1}f^{-1}$ of $\dis_{\gamma}$ belongs to $\langle \dis_{\alpha_i},\, i\in I\rangle$.
\end{proof}


For a left quasigroup $Q$ and a subgroup $N\leq \lmlt(Q)$ we can define two equivalence relations as
\begin{align*} 
 x\, \mathcal{O}\, y\, & \text{if and only if } x=h(y)\, \text{ for some } h\in N,\\
 x\, \c{N}\, y &\text{ if and only if } L_x L_y^{-1} \in N.
\end{align*}
Hence we have two operators $\c{*}$ and $\mathcal{O}_*$ from the set of subgroups of $\lmlt(Q)$ to the set of the equivalence relations on $Q$.

We say that $Q$ is {\it connected by $N$} if $N$ is transitive on $Q$, i.e. $\mathcal{O}_N=1_Q$. If $\lmlt(Q)$ is transitive we simply say that $Q$ is {\it connected}. Note that if $Q$ is a connected idempotent left quasigroups or a left quasigroup with a Mal'cev term then $Q$ is connected by $\dis(Q)$ \cite[Proposition 3.6]{Maltsev_paper}. If all the subalgebras of $Q$ are connected we say that $Q$ is {\it superconnected} (in particular $Q$ is connected). The class of superconnected left quasigroups is closed under subalgebras and homomorphic images and the class of superconnected idempotent left quasigroups is also closed under extensions \cite[Corollary 1.12]{Super}. In particular, finite latin left quasigroups are superconnected but the converse is not true in general (see \cite[Example 1.8(ii)]{Super} for an example of an infinite latin quandle that is not superconnected).

The {\it admissible subgroups} of $Q$, as defined in \cite{LTT}, are $$\N(Q)=\setof{N\trianglelefteq \lmlt(Q)}{\mathcal{O}_N\leq \c{N}}=\setof{N\trianglelefteq \lmlt(Q)}{\dis_{\mathcal{O}_N}\leq N}.$$
The admissible subgroups form a sublattice of the lattice of the normal subgroups of $\lmlt(Q)$. If $N\in \N(Q)$ then $\mathcal{O}_N$ is a congruence of $Q$ and $\dis_\alpha,\dis^\alpha,\lmlt^\alpha \in \N(Q)$ \cite[Lemma 1.7, Corollary 1.9]{LTT}. It is easy to verify that 
$$\mathcal{O}_{\dis_\alpha}\leq \mathcal{O}_{\dis^\alpha}\leq \mathcal{O}_{\lmlt^\alpha} \leq \alpha\leq \c{\dis_\alpha}\leq \c{\dis^\alpha}$$
for every congruence $\alpha$. 

%
%
%
%
%
%

%

%

\begin{lemma}\label{blocks connected}
Let $Q$ be an idempotent left quasigroup and $\alpha\in Con(Q)$. If the blocks of $\alpha$ are connected then $\alpha=\mathcal{O}_{\dis_\alpha}=\mathcal{O}_{\dis^\alpha}$.
\end{lemma}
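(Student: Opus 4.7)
The plan is to use the chain
\[ \mathcal{O}_{\dis_\alpha}\leq \mathcal{O}_{\dis^\alpha}\leq \mathcal{O}_{\lmlt^\alpha}\leq \alpha \]
displayed earlier in the section: this gives two of the required inequalities for free, so the whole statement reduces to proving the reverse inequality $\alpha\leq \mathcal{O}_{\dis_\alpha}$.

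To that end, fix $x\,\alpha\, y$ and let $B=[x]_\alpha=[y]_\alpha$. Because $Q$ is idempotent, blocks of congruences are subalgebras, so $B$ is itself an idempotent left quasigroup. By hypothesis $B$ is connected, and hence by \cite[Proposition 3.6]{Maltsev_paper} $B$ is connected by its own displacement group $\dis(B)$. Choose therefore some $h\in \dis(B)$ with $h(x)=y$.

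To transport $h$ back into $\dis_\alpha\leq \dis(Q)$, observe that $\dis(B)$ is generated by the elements $L_a^B(L_b^B)^{-1}$ with $a,b\in B$, where $L^B$ denotes left translation inside $B$. For such $a,b$ we have $a\,\alpha\, b$, so by the definition \eqref{disalpha} the element $L_aL_b^{-1}$ of $\lmlt(Q)$ lies in $\dis_\alpha$; and since $\dis_\alpha\leq \dis^\alpha$, by \eqref{eq for dis} it preserves every $\alpha$-block setwise. Restricting to $B$ it clearly acts as $L_a^B(L_b^B)^{-1}$. It follows that the restriction homomorphism $\dis_\alpha\to \Sym(B)$ has image containing all the generators of $\dis(B)$, hence contains $h$. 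Lifting a preimage $\tilde h\in \dis_\alpha$ we get $\tilde h(x)=h(x)=y$, i.e.\ $x\,\mathcal{O}_{\dis_\alpha}\, y$, which is what we needed.

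The only point requiring care is the last step, namely checking that generators of $\dis(B)$ are realised by restrictions of elements of $\dis_\alpha$; but this is immediate from the definitions once one observes that two elements in the same block give rise to a generator of $\dis_\alpha$, and that $\dis_\alpha$ stabilises each block setwise. With this in hand the three equivalence relations collapse to $\alpha$.
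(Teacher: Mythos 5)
Your proof is correct and follows essentially the same route as the paper: restrict $\dis_\alpha$ to a block $B$ (legitimate since $\dis_\alpha\leq\dis^\alpha$ stabilises blocks), observe that the image contains $\dis(B)$, and use that a connected idempotent left quasigroup is connected by its displacement group. One small point of care: by the paper's definition $\dis(B)$ is the \emph{normal closure} in $\lmlt(B)$ of the set $\{L_a^B(L_b^B)^{-1}: a,b\in B\}$, not a priori the subgroup they generate; but your argument covers this too, since any $h\in\lmlt(B)$ lifts to $\tilde h\in\langle L_c: c\in B\rangle\leq\lmlt(Q)$ with $\tilde h|_B=h$, and $\tilde h L_aL_b^{-1}\tilde h^{-1}\in\dis_\alpha$ by the normal-closure definition of $\dis_\alpha$ — this is exactly the group $D=\langle hL_yL_z^{-1}h^{-1}: y,z\in[x],\ h\in H\rangle$ used in the paper's proof.
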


\begin{proof}
In general we have $\mathcal{O}_{\dis_\alpha}\leq \mathcal{O}_{\dis^\alpha}\leq \alpha$. Let $x\in Q$ and consider the groups $H=\langle L_y,\, y\in [x]\rangle$ and $D=\langle h L_y L_z^{-1} h^{-1}, \, y,z\in [x],\, h\in H\rangle\leq \dis_\alpha$. The action of $D$ coincides with the action of $\dis([x])$ that is transitive on $[x]$. Therefore, $\alpha=\mathcal{O}_{\dis_\alpha}$.
\end{proof}

The following Proposition shows that the correspondence theorem for normal subgroups restricts to the sublattices of admissible groups.

\begin{proposition}\label{iso of lattices}
Let $Q$ be a left quasigroup and $\alpha$ be a congruence of $Q$. Then the mappings 
\begin{align*}
\setof{N\in \N(Q)}{\lmlt^\alpha\leq N}&\longleftrightarrow \N(Q/\alpha)\\
N&\mapsto \pi_\alpha(N)\\
\pi_\alpha^{-1}(K)&\leftmapsto K
\end{align*}
provides an isomorphism of lattices.
\end{proposition}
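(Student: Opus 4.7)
The plan is to reduce the statement to the classical group-theoretic correspondence theorem and then check that the bijection restricts to admissible subgroups on both sides. By the standard correspondence, the mappings $N\mapsto \pi_\alpha(N)$ and $K\mapsto \pi_\alpha^{-1}(K)$ already give a lattice isomorphism between the normal subgroups of $\lmlt(Q)$ containing $\lmlt^\alpha=\ker\pi_\alpha$ and the normal subgroups of $\lmlt(Q/\alpha)$. So what remains is to verify two compatibility statements: (a) if $N\in\N(Q)$ with $\lmlt^\alpha\leq N$, then $\pi_\alpha(N)\in\N(Q/\alpha)$; and (b) if $K\in\N(Q/\alpha)$, then $\pi_\alpha^{-1}(K)\in\N(Q)$ (its containment of $\lmlt^\alpha$ is automatic).

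For (b), the key move is to identify the congruence on $Q$ matching the orbit equivalence $\mathcal{O}_K$ on $Q/\alpha$. Define $\tilde\beta\in\con(Q)$ by $x\,\tilde\beta\, y$ iff $[x]_\alpha$ and $[y]_\alpha$ lie in the same $K$-orbit; then $\alpha\leq\tilde\beta$ and $\tilde\beta/\alpha=\mathcal{O}_K$. Admissibility of $K$ gives $\dis_{\tilde\beta/\alpha}\leq K$, and Lemma \ref{pi} rewrites this as $\pi_\alpha(\dis_{\tilde\beta})\leq K$, hence $\dis_{\tilde\beta}\leq \pi_\alpha^{-1}(K)=N$. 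Finally, $\mathcal{O}_N\leq\tilde\beta$ is immediate (if $y=h(x)$ with $h\in N$, then $\pi_\alpha(h)\in K$ sends $[x]_\alpha$ to $[y]_\alpha$), so $\dis_{\mathcal{O}_N}\leq\dis_{\tilde\beta}\leq N$, proving $N\in\N(Q)$.

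For (a), set $K=\pi_\alpha(N)$ and let $\tilde\beta\in\con(Q)$ again denote the pullback of $\mathcal{O}_K$ along the quotient. Now observe that $\tilde\beta=\mathcal{O}_N\vee\alpha$: one inclusion is routine (both $\mathcal{O}_N$ and $\alpha$ are contained in $\tilde\beta$), and the other follows because $x\,\tilde\beta\,y$ gives $h\in N$ with $h(x)\,\alpha\,y$, hence $x\,\mathcal{O}_N\,h(x)\,\alpha\,y$. By Proposition \ref{p:dis_alpha1}(iii), $\dis_{\tilde\beta}=\langle\dis_{\mathcal{O}_N},\dis_\alpha\rangle$. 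Since $N$ is admissible we have $\dis_{\mathcal{O}_N}\leq N$, and $\dis_\alpha\leq\lmlt^\alpha\leq N$ by hypothesis, so $\dis_{\tilde\beta}\leq N$. Applying Lemma \ref{pi} once more, $\dis_{\mathcal{O}_K}=\dis_{\tilde\beta/\alpha}=\pi_\alpha(\dis_{\tilde\beta})\leq \pi_\alpha(N)=K$, so $K\in\N(Q/\alpha)$.

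The main obstacle is bookkeeping: one has to correctly identify $\mathcal{O}_{\pi_\alpha(N)}$ on the quotient with a congruence on $Q$ (namely $\mathcal{O}_N\vee\alpha$ in one direction, and $\pi_\alpha^{-1}(\mathcal{O}_K)$ in the other) so that Lemma \ref{pi} and Proposition \ref{p:dis_alpha1} can transport the admissibility condition across the quotient. Once this bridge is in place, the argument is formal, and the inclusion $\lmlt^\alpha\leq N$ is used only in (a) to guarantee $\dis_\alpha\leq N$, which is exactly what is needed to absorb the $\dis_\alpha$ summand arising from the join.
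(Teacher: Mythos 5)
Your proposal is correct, and its skeleton is the one the paper uses: start from the classical correspondence theorem for normal subgroups above $\ker\pi_\alpha=\lmlt^\alpha$ and then check that admissibility is preserved in both directions. The difference is in how those two checks are carried out. For the preimage direction the paper argues directly on generators: for $h\in H=\pi_\alpha^{-1}(K)$ it computes $\pi_\alpha(L_{h(x)}L_x^{-1})=L_{\pi_\alpha(h)([x])}L_{[x]}^{-1}\in\dis_{\mathcal{O}_K}\leq K$, so $L_{h(x)}L_x^{-1}\in H$ and $\dis_{\mathcal{O}_H}\leq H$; you reach the same conclusion by introducing the pullback congruence $\tilde\beta$ with $\tilde\beta/\alpha=\mathcal{O}_K$ and transporting $\dis_{\mathcal{O}_K}\leq K$ through Lemma \ref{pi}. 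These are the same computation in different clothing. For the image direction the paper simply cites \cite[Lemma 1.7]{LTT}, whereas you supply an actual argument via the identification $\tilde\beta=\mathcal{O}_N\vee\alpha$, Proposition \ref{p:dis_alpha1}(iii), and the inclusion $\dis_\alpha\leq\lmlt^\alpha\leq N$; this is a self-contained replacement for the citation and it also makes transparent exactly where the hypothesis $\lmlt^\alpha\leq N$ enters (only to absorb the $\dis_\alpha$ factor of the join). One small point worth making explicit in your write-up of (a): $\mathcal{O}_N$ is a congruence because $N\in\N(Q)$, so the join $\mathcal{O}_N\vee\alpha$ is a congruence and Lemma \ref{pi} applies to it; you cannot assume from the outset that $\mathcal{O}_K$ is a congruence of $Q/\alpha$, since that is part of what admissibility of $K$ delivers. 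As written your argument respects this order, so there is no gap.
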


\begin{proof}
The pair of maps above clearly define a bijective correspondence between the lattice of normal subgroups of $\lmlt(Q)$ and the lattice of normal subgroups of $\lmlt(Q/\alpha)\cong \lmlt(Q)/\lmlt^\alpha$.

We already proved that if $N\in \N(Q)$ then $\pi_\alpha(N)\in \N(Q/\alpha)$ in \cite[Lemma 1.7]{LTT}. Assume that $K\in \N(Q/\alpha)$ and let $H=\pi_\alpha^{-1}(K)$. Clearly $H$ is normal in $\lmlt(Q)$. If $h\in H$ we have
$$\pi_\alpha (L_{h(x)} L_x^{-1})=L_{[h(x)]}L_{[x]}^{-1}=L_{\pi_\alpha(h)([x])}L_{[x]}^{-1}\in \dis_{\mathcal{O}_K}\leq K.$$
Thus, $L_{h(x)} L_x^{-1}\in H$ and so $\dis_{\mathcal{O}_H}\leq H$.
\end{proof}

Given a left quasigroup $Q$, note that the images of the operators $\dis^*$ and $\dis_*$ lie in the sublattice 
\begin{align}\label{NN}
\N'(Q)=\setof{N\in \N(Q)}{N\leq \dis(Q)}=\setof{N\cap \dis(Q) }{N\in \N(Q)}.
\end{align}
The correspondence given in Proposition \ref{iso of lattices} can be restricted to this sublattice.
\begin{corollary}\label{iso of lattice 2}
Let $Q$ be a left quasigroup and $\alpha$ be a congruence of $Q$. Then the mappings 
\begin{align*}
\setof{N\in \N'(Q)}{\dis^\alpha\leq N} &\longleftrightarrow \N'(Q/\alpha)\\
N&\mapsto \pi_\alpha(N)\\
\pi_\alpha^{-1}(K)&\leftmapsto K
\end{align*}
provides an isomorphism of lattices.
\end{corollary}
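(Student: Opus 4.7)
The strategy is to restrict the bijection of Proposition \ref{iso of lattices} to the sublattice $\mathcal{N}'$. The key tool is Lemma \ref{pi} together with the remark following it, which says that $\pi_\alpha$ restricts to a surjective group homomorphism $\dis(Q)\twoheadrightarrow \dis(Q/\alpha)$ with kernel $\dis^\alpha$. Accordingly, I read the inverse assignment $K\mapsto \pi_\alpha^{-1}(K)$ in the statement as the preimage under this restriction, namely $K\mapsto \pi_\alpha^{-1}(K)\cap \dis(Q)$.

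First I would check well-definedness of both maps. For $N\in \mathcal{N}'(Q)$ with $\dis^\alpha\leq N$, Proposition \ref{iso of lattices} yields $\pi_\alpha(N)\in \mathcal{N}(Q/\alpha)$, while Lemma \ref{pi} gives $\pi_\alpha(N)\leq \pi_\alpha(\dis(Q))=\dis(Q/\alpha)$, so $\pi_\alpha(N)\in \mathcal{N}'(Q/\alpha)$. Conversely, given $K\in \mathcal{N}'(Q/\alpha)$, Proposition \ref{iso of lattices} gives $\pi_\alpha^{-1}(K)\in \mathcal{N}(Q)$; since admissible subgroups form a sublattice of the normal subgroups of $\lmlt(Q)$ and $\dis(Q)=\dis_{1_Q}$ is admissible, the intersection $\pi_\alpha^{-1}(K)\cap \dis(Q)$ belongs to $\mathcal{N}'(Q)$ and manifestly contains $\dis^\alpha=\lmlt^\alpha\cap \dis(Q)$.

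The remaining task is to verify that the two assignments are mutually inverse. One direction is immediate from surjectivity of $\pi_\alpha|_{\dis(Q)}$: $\pi_\alpha(\pi_\alpha^{-1}(K)\cap \dis(Q))=K\cap \dis(Q/\alpha)=K$. The other requires a small calculation: $\pi_\alpha^{-1}(\pi_\alpha(N))=N\cdot \lmlt^\alpha$, and if $n\ell\in \dis(Q)$ with $n\in N\leq \dis(Q)$ and $\ell\in \lmlt^\alpha$, then $\ell\in \lmlt^\alpha\cap \dis(Q)=\dis^\alpha\leq N$, hence $(N\cdot\lmlt^\alpha)\cap \dis(Q)=N$. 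Both maps are clearly order-preserving, so this order-isomorphism between the two complete lattices is a lattice isomorphism.

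The main subtlety I expect is precisely the switch from the kernel $\lmlt^\alpha$ (used in Proposition \ref{iso of lattices}) to the kernel $\dis^\alpha$ of the restricted map $\pi_\alpha|_{\dis(Q)}$; these differ in general, so one cannot naively restrict the bijection from Proposition \ref{iso of lattices}. Once the intersection with $\dis(Q)$ is built into the inverse assignment, the rest is a routine application of the already-proven proposition and the correspondence theorem for the restricted surjection.
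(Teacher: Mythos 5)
Your proof is correct and follows the route the paper intends: the corollary is given without its own proof, as a restriction of Proposition \ref{iso of lattices} to the sublattice $\N'$, and you carry out exactly that restriction. You also correctly isolate the one point that needs care — reading $\pi_\alpha^{-1}(K)$ as the preimage under $\pi_\alpha|_{\dis(Q)}$, i.e. $\pi_\alpha^{-1}(K)\cap\dis(Q)$, and using $\dis^\alpha\leq N$ to get $(N\cdot\lmlt^\alpha)\cap\dis(Q)=N$ — which is precisely why the kernel changes from $\lmlt^\alpha$ to $\dis^\alpha$.
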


The lattice of congruences of a left quasigroups and the lattice of admissible subgroups are related by a monotone Galois connection. Since the image of the operator $\dis^*$ lie in the sublattice defined in \eqref{NN} we can restate \cite[Theorem 1.10]{LTT} as follows.

\begin{theorem}\label{galois_connection}\cite[Theorem 1.10]{LTT}
Let $Q$ be a left quasigroup. The pair of mappings $\mathcal{O}_*$ and $\dis^{*}$ provides a monotone Galois connection between $Con(Q)$ and $\N'(Q)$.
\end{theorem}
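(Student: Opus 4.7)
The plan is to verify the three standard ingredients of a monotone Galois connection: well-definedness of both maps, monotonicity of each, and the adjunction equivalence $\mathcal{O}_N\leq \alpha\iff N\leq \dis^\alpha$ for $N\in \N'(Q)$ and $\alpha\in Con(Q)$.

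For well-definedness, note that the author has already recalled that $N\in \N(Q)$ implies $\mathcal{O}_N\in Con(Q)$ and $\dis^\alpha\in \N(Q)$. Since $\N'(Q)\subseteq \N(Q)$ by definition, $\mathcal{O}_*$ restricts to a map $\N'(Q)\to Con(Q)$. Conversely, $\dis^\alpha=\dis(Q)\cap\lmlt^\alpha\leq \dis(Q)$ directly from \eqref{eq for dis}, so $\dis^\alpha\in \N'(Q)$. Monotonicity of both maps is immediate from the definitions: enlarging $N$ only adds pairs of the form $(h(y),y)$, so $\mathcal{O}_*$ is monotone; and if $\alpha\leq \beta$ then $h(x)\,\alpha\, x$ implies $h(x)\,\beta\, x$, so $\dis^*$ is monotone.

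The core step is the adjunction. Assume $\mathcal{O}_N\leq \alpha$, fix $h\in N$ and $x\in Q$; then $h(x)=h(x)$ shows $h(x)\,\mathcal{O}_N\, x$, hence $h(x)\,\alpha\, x$. Since $N\leq \dis(Q)$ (this is exactly where the restriction to $\N'(Q)$ is used), we obtain $h\in \dis(Q)$ with $h(x)\,\alpha\, x$ for every $x$, i.e.\ $h\in \dis^\alpha$. Therefore $N\leq \dis^\alpha$. Conversely, assume $N\leq \dis^\alpha$ and take $x\,\mathcal{O}_N\, y$, so $x=h(y)$ for some $h\in N$. Then $h\in \dis^\alpha$ gives $x=h(y)\,\alpha\, y$, and thus $\mathcal{O}_N\leq \alpha$.

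There is essentially no obstacle: the statement is almost tautological once one unwinds the definitions of $\mathcal{O}_N$ and $\dis^\alpha$. The only point that requires a moment's attention is recognising that passing to the sublattice $\N'(Q)$ is precisely what makes the forward direction of the equivalence work, because $\dis^\alpha$ is by construction a subgroup of $\dis(Q)$ and so the Galois partner of a congruence can never land outside $\N'(Q)$. The conclusion then follows, and the closure operators $\dis^{\mathcal{O}_{(-)}}$ on $\N'(Q)$ and $\mathcal{O}_{\dis^{(-)}}$ on $Con(Q)$ are automatically available from the general theory of Galois connections.
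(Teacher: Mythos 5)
Your proof is correct; the adjunction $\mathcal{O}_N\leq\alpha\iff N\leq\dis^\alpha$ for $N\in\N'(Q)$ is exactly the content of the cited result, and your verification (well-definedness via the quoted facts that $\mathcal{O}_N\in Con(Q)$ for admissible $N$ and $\dis^\alpha\in\N(Q)$ with $\dis^\alpha\leq\dis(Q)$, plus the two-line unwinding of the definitions) is the standard argument. The paper itself offers no proof here—it simply cites \cite[Theorem 1.10]{LTT} and observes that the codomain of $\dis^*$ may be restricted to $\N'(Q)$—so your write-up supplies the same argument in full detail rather than a different one.
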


%


\subsection{The Cayley kernel}
%

Let $Q$ be a left quasigroup, we introduce the {\it Cayley kernel of $Q$} as the equivalence relation $\lambda_Q=\c{1}$. Namely, the Cayley kernel is defined by setting
$$x\, \lambda_Q\, y\, \text{ if and only if } L_x=L_y.$$
The Cayley kernel is not a congruence in general. If $\lambda_Q$ is a congruence we say that $Q$ is a {\it Cayley left quasigroup} (e.g. racks are Cayley left quasigroups).

\begin{remark}\label{remark on lambda}
Let us point out two easy properties of the Cayley kernel of a left quasigroup $Q$. Let $\alpha\in Con(Q)$:
\begin{itemize}
\item[(i)] $\alpha\leq \lambda_Q$ if and only if $\dis_\alpha=1$. 
\item[(ii)] If $x\,\lambda_Q\, y$ then $[x]_\alpha\,\lambda_{Q/\alpha}\, [y]_\alpha$. Indeed: if $L_x=L_y$, then $L_{[x]_\alpha}=\pi_\alpha(L_x)=\pi_\alpha(L_y)= L_{[y]_\alpha}$.
\item[(iii)] According to  \cite[Proposition 1.6]{LTT}, $\lambda_{Q/\alpha}=\c{\dis^\alpha}/\alpha$.
\end{itemize}
\end{remark}

The Cayley kernel plays a role in the universal algebraic theory of left quasigroups. Indeed in \cite{covering_paper} we established that {\it strongly abelian congruences} of left quasigroups in the sense of \cite{TCT} are those below the Cayley kernel. Such congruences form a complete sublattice of the lattice of congruences. 

\begin{proposition}
Let $Q$ be a left quasigroup. The set $\setof{\alpha\in Con(Q)}{\alpha\leq \lambda_Q}$ is a complete sublattice of $Con(Q)$.
\end{proposition}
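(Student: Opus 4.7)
The plan is to verify closure under arbitrary meets and arbitrary joins separately in $Con(Q)$. Closure under meets is immediate: if $\alpha_i\leq\lambda_Q$ for every $i\in I$, then $\bigwedge_{i\in I}\alpha_i\leq\alpha_j\leq\lambda_Q$ for any fixed $j$, so the meet stays below $\lambda_Q$. So the only real content is closure under joins.

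For joins, I would translate the condition $\alpha\leq\lambda_Q$ into a group-theoretic one using Remark \ref{remark on lambda}(i): $\alpha\leq\lambda_Q$ if and only if $\dis_\alpha=1$. Assuming $\alpha_i\leq\lambda_Q$ for every $i\in I$, this gives $\dis_{\alpha_i}=1$ for all $i$. Setting $\gamma=\bigvee_{i\in I}\alpha_i$ in $Con(Q)$, Proposition \ref{p:dis_alpha1} (the part establishing $\dis_\gamma=\langle\dis_{\alpha_i},\, i\in I\rangle$) then yields
\[ \dis_\gamma \;=\; \langle \dis_{\alpha_i},\, i\in I\rangle \;=\; \langle 1\rangle \;=\; 1. \]
Invoking Remark \ref{remark on lambda}(i) in the other direction gives $\gamma\leq\lambda_Q$, which is exactly what we need.

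I do not foresee a genuine obstacle here: both halves of the argument reduce to already-established lemmas. The mildly delicate point—if any—is just remembering that the join in $Con(Q)$ is the congruence generated by the union (in general the transitive closure of the union, not the set-theoretic union), which is precisely the equivalence relation to which Proposition \ref{p:dis_alpha1} applies. Once that is observed, the proof is essentially a one-line computation in the displacement-group lattice transferred back through the Galois-type correspondence described in Remark \ref{remark on lambda}(i).
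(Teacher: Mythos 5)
Your proof is correct and follows essentially the same route as the paper: meets are handled trivially, and for joins one passes to displacement groups via Remark \ref{remark on lambda}(i) and applies the join formula $\dis_\gamma=\langle\dis_{\alpha_i},\,i\in I\rangle$ from Proposition \ref{p:dis_alpha1}. Your remark that the join is the transitive closure of the union is exactly the point exploited in the paper's proof of that formula, so nothing further is needed.
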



\begin{proof}
Let $\setof{\alpha_i}{i\in I}\subseteq Con(Q)$ and assume that $\alpha_i\leq \lambda_Q$ for every $i\in I$, namely $\dis_{\alpha_i}=1$ for every $i\in I$. Clearly $\wedge_{i \in I}\alpha_i\leq \lambda_Q$. Let $\beta=\vee_{i\in I} \alpha_i$.  According to Proposition \ref{p:dis_alpha1}(ii) $\dis_{\beta}=1$, namely $\beta\leq \lambda_Q$ by Remark \ref{remark on lambda}(i). 
\end{proof}

The congruences defined by orbits are related to the Cayley kernel.
%
%
%
%
%

\begin{lemma}\label{lemma below lambda}
Let $Q$ be a left quasigroup, $\alpha\in Con(Q)$, $\beta=\mathcal{O}_{\dis^\alpha}$ and $\gamma=\mathcal{O}_{\dis_\alpha}$. Then:
\begin{itemize}

\item[(i)] $\dis^\alpha=\dis^\beta$ and $\alpha/\beta\leq \lambda_{Q/\beta}$

\item[(ii)] $\alpha/\gamma\leq \lambda_{Q/\gamma}$.
\end{itemize}
\end{lemma}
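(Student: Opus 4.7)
The underlying principle in both parts is a simple absorption property: for any subgroup $N\leq \dis(Q)$ one has $N\leq \dis^{\mathcal{O}_N}$, because every $h\in N$ sends each $x\in Q$ into its own $N$-orbit $x^N$, i.e.\ $h(x)\,\mathcal{O}_N\,x$. Combining this with Remark~\ref{remark on lambda}(iii), which identifies $\lambda_{Q/\delta}$ with $\c{\dis^\delta}/\delta$, reduces each claim to verifying an inclusion of the form $\alpha\leq \c{\dis^\delta}$ and then passing to the quotient by $\delta$.

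For (i), I would first establish $\dis^\alpha=\dis^\beta$. The inclusion $\dis^\beta\leq \dis^\alpha$ is immediate from \eqref{eq for dis} together with $\beta\leq\alpha$. For the reverse, if $h\in\dis^\alpha$ and $x\in Q$ then $h(x)\in x^{\dis^\alpha}=[x]_\beta$, so $h(x)\,\beta\,x$, which by \eqref{eq for dis} gives $h\in\dis^\beta$. Now, directly from the definition of $\dis_\alpha$ in \eqref{disalpha} one has $\alpha\leq \c{\dis_\alpha}\leq \c{\dis^\alpha}=\c{\dis^\beta}$. Since $\beta\leq\alpha\leq \c{\dis^\beta}$, quotienting both sides by $\beta$ and invoking Remark~\ref{remark on lambda}(iii) yields
$$\alpha/\beta\leq \c{\dis^\beta}/\beta=\lambda_{Q/\beta}.$$

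For (ii), applying the same absorption principle to $N=\dis_\alpha$ gives $\dis_\alpha\leq \dis^{\mathcal{O}_{\dis_\alpha}}=\dis^\gamma$, as every $h\in\dis_\alpha$ maps $x$ into $x^{\dis_\alpha}=[x]_\gamma$. Therefore $\alpha\leq \c{\dis_\alpha}\leq \c{\dis^\gamma}$, and since $\gamma\leq\alpha$ we may pass to $Q/\gamma$ and use Remark~\ref{remark on lambda}(iii) once more to conclude $\alpha/\gamma\leq \c{\dis^\gamma}/\gamma=\lambda_{Q/\gamma}$.

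There is no real obstruction here. The one delicate point to keep in mind is that the relation $\c{\dis^\delta}$ need not be a congruence of $Q$, but $\c{\dis^\delta}/\delta$ does define a congruence of $Q/\delta$---this is exactly the content we invoke from Remark~\ref{remark on lambda}(iii). Everything else reduces to unpacking the definitions of $\dis^*$, $\dis_*$, $\mathcal{O}_*$, and $\c{*}$.
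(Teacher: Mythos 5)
Your argument is correct and rests on the same key fact as the paper's proof, namely $\dis_\alpha\leq\dis^\alpha=\dis^\beta$ (resp.\ $\dis_\alpha\leq\dis^\gamma$), with the equality $\dis^\alpha=\dis^\beta$ obtained by unwinding the Galois connection rather than citing it. The only (cosmetic) difference is in the final step: you stay in $Q$ with the chain $\alpha\leq\c{\dis_\alpha}\leq\c{\dis^\beta}$ and quotient via Remark~\ref{remark on lambda}(iii), whereas the paper pushes down by $\pi_\beta$ to get $\dis_{\alpha/\beta}=\pi_\beta(\dis_\alpha)=1$ and applies Remark~\ref{remark on lambda}(i).
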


\begin{proof}

(i) The pair $\mathcal{O}_*$ and $\dis^*$ provides a Galois connection, therefore $\dis^\beta=\dis^{\mathcal{O}_{\dis^\alpha}}=\dis^\alpha$. Moreover, $\dis_\alpha\leq \dis^\alpha=\dis^\beta$ and so $\dis_{\alpha/\beta}=\pi_\beta(\dis_\alpha)=1$. Thus, $\alpha/\beta\leq \lambda_{Q/\beta}$ by Remark \ref{remark on lambda}(i).

(ii) Clearly $\gamma\leq \alpha$ and $\dis_\alpha\leq \dis^\gamma$. Therefore $\dis_{\alpha/\gamma}=\pi_{\gamma}(\dis_\alpha)=1$, i.e. $\alpha/ \gamma\leq \lambda_{Q/\gamma}$ (see Remark \ref{remark on lambda}(i)).
\end{proof}

%

%

Let $Q$ be a left quasigroup. If $\lambda_Q=0_Q$ we say that $Q$ is {\it faithful}. The class of faithful left quasigroups is closed under direct products and the class of idempotent faithful left quasigroups is closed under extensions \cite[Corollary 1.12]{Super}.

\begin{lemma}\label{factor of lambda}
Let $Q$ be a left quasigroup, $\alpha\in Con(Q)$, $\setof{\alpha_i}{i\in I}\subseteq Con(Q)$ and $\beta=\wedge_{i\in I}\alpha_i$. 
\begin{itemize}
\item[(i)] If $Q/\alpha$ is faithful then $\lambda_Q\leq \alpha$.
\item[(ii)] If $Q/\alpha_i$ is faithful for every $i\in I$ then $Q/\beta$ is faithful.
\end{itemize}

\end{lemma}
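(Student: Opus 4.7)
The plan is to unwind the definition of faithfulness in each case and to use the propagation of the Cayley kernel across factor maps recorded in Remark \ref{remark on lambda}(ii). Neither item requires any machinery beyond the very definitions of $\lambda_Q$, congruences, and the inherited multiplication on a quotient.

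For (i), recall that $Q/\alpha$ being faithful means $\lambda_{Q/\alpha}=0_{Q/\alpha}$. Assume $x\,\lambda_Q\,y$, so $L_x=L_y$. By Remark \ref{remark on lambda}(ii) the pair $([x]_\alpha,[y]_\alpha)$ lies in $\lambda_{Q/\alpha}$, and faithfulness of $Q/\alpha$ then forces $[x]_\alpha=[y]_\alpha$, i.e.\ $x\,\alpha\,y$. Hence $\lambda_Q\leq \alpha$.

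For (ii), I would verify $\lambda_{Q/\beta}=0_{Q/\beta}$ directly. Suppose $[x]_\beta\,\lambda_{Q/\beta}\,[y]_\beta$, so $L_{[x]_\beta}=L_{[y]_\beta}$; equivalently, $xz\,\beta\,yz$ for every $z\in Q$. Since $\beta\leq\alpha_i$ for each $i\in I$, this gives $xz\,\alpha_i\,yz$ for every $z\in Q$ and every $i\in I$, that is, $L_{[x]_{\alpha_i}}=L_{[y]_{\alpha_i}}$ in $Q/\alpha_i$. By faithfulness of $Q/\alpha_i$ we conclude $x\,\alpha_i\,y$ for every $i$, whence $x\,\beta\,y$ and $Q/\beta$ is faithful.

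Both items are essentially direct calculations from Remark \ref{remark on lambda}, so there is no real obstacle. If one prefers, (ii) can be repackaged using Remark \ref{remark on lambda}(iii): $Q/\alpha$ faithful is equivalent to $\c{\dis^\alpha}\leq\alpha$, so via Proposition \ref{p:dis_alpha1}(ii) and the observation that $\c{\bigcap_i N_i}=\bigcap_i\c{N_i}$, one obtains $\c{\dis^\beta}=\bigcap_i\c{\dis^{\alpha_i}}\leq\bigcap_i\alpha_i=\beta$.
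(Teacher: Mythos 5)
Your proof is correct. Part (i) is exactly the paper's argument: apply Remark \ref{remark on lambda}(ii) and then faithfulness of $Q/\alpha$. For part (ii) your primary argument is a direct element-level verification: you unwind $L_{[x]_\beta}=L_{[y]_\beta}$ to $xz\,\beta\,yz$ for all $z$, push this down to each $Q/\alpha_i$ using $\beta\leq\alpha_i$, and invoke faithfulness of each factor to get $x\,\alpha_i\,y$ for every $i$, hence $x\,\beta\,y$. This is slightly more elementary than what the paper does, requiring nothing beyond the definition of the quotient operation. The paper instead argues through the operators: by Remark \ref{remark on lambda}(iii), faithfulness of $Q/\alpha_i$ amounts to $\alpha_i=\c{\dis^{\alpha_i}}$, and then $\c{\dis^{\beta}}=\c{\bigcap_{i}\dis^{\alpha_i}}=\wedge_{i}\c{\dis^{\alpha_i}}=\wedge_i \alpha_i=\beta$ via Proposition \ref{p:dis_alpha1}(ii) and the observation that $\c{*}$ commutes with intersections. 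That is precisely the ``repackaging'' you sketch in your final sentence, so the two routes coincide up to presentation: yours trades the lattice-theoretic bookkeeping for a transparent chase through elements, while the paper's version makes the statement an instance of the general interplay between congruences and admissible subgroups.
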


\begin{proof}
%
%

(i) Let $L_x=L_y$.  Then by Remark \ref{remark on lambda}(ii) we have $L_{[x]_\alpha}=L_{[y]_\alpha}$ and so $[x]_\alpha=[y]_\alpha$ since $Q/\alpha$ is faithful.

(ii) According to Remark \ref{remark on lambda}(iii), $\alpha_i=\c{\dis^{\alpha_i}}$ for every $i\in I$ and we need to prove that $\beta=\c{\dis^\beta}$. Using that $\dis^\beta=\bigcap_{i\in I} \dis^{\alpha_i}$, we have $\c{\dis^\beta}=\wedge_{i\in I} \c{\dis^{\alpha_i}}=\wedge_{i\in I} \alpha_i=\beta$.
%
%
\end{proof}

The converse of Lemma \ref{factor of lambda}(i) does not hold. Given a left quasigroup $Q$ and $\alpha=\mathcal{O}_{\dis(Q)}$ then $\lambda_{Q/\alpha}=1_Q$ \cite[Corollary 1.9]{LTT}. It is easy to construct a left quasigroup such that $\lambda_Q\leq \alpha$ and that $Q/\alpha$ is not trivial (e.g. a faithful left quasigroup that is not connected by its displacement group).

Let $Q$ be a left quasigroup. If all the subalgebras of $Q$ are faithful, $Q$ is said to be {\it superfaithful}. The class of superfaithful left quasigroups is closed under subalgebras and the class of idempotent superfaithful left quasigroups is closed under extensions \cite[Corollary 1.12]{Super}.

\begin{lemma}	\label{lemma on super}
Let $Q$ be a left quasigroup. 
\begin{itemize}
\item[(i)] If $Q$ has injective right multiplications then $Q$ is superfaithful.
\item[(ii)] If $Q$ is superfaithful then $\mathcal{P}_2\notin \mathcal{S}(Q)$. 
\item[(iii)] If $Q$ is idempotent and $\mathcal{P}_2\notin \mathcal{S}(Q)$ then $Q$ is superfaithful.
\end{itemize}
\end{lemma}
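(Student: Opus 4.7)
The plan is to handle the three items separately; each reduces to a short direct argument, and I do not expect significant obstacles.

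For (i), I would unfold the definitions: assume every right multiplication $R_z$ is injective on $Q$, and suppose $L_x = L_y$ for some $x,y\in Q$. Then for every $z\in Q$ we have $xz=L_x(z)=L_y(z)=yz$, i.e.\ $R_z(x)=R_z(y)$, and injectivity of $R_z$ forces $x=y$. Hence $Q$ is faithful. Since every subalgebra of $Q$ inherits the injectivity of the right multiplications (they are just the restrictions of the $R_z$'s), the same argument works within any subalgebra, and $Q$ is superfaithful.

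For (ii), the statement is essentially by contraposition: if $\mathcal{P}_2=\{a,b\}\in\mathcal{S}(Q)$, then by the defining identity $xy\approx y$ of a projection left quasigroup one has $L_a=L_b=\mathrm{id}$, so $a\,\lambda_{\mathcal{P}_2}\,b$ while $a\neq b$. Thus $\mathcal{P}_2$ itself fails to be faithful, so $Q$ cannot be superfaithful.

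The main content is in (iii). I would argue by contradiction: suppose $Q$ is idempotent, $\mathcal{P}_2\notin \mathcal{S}(Q)$, but some subalgebra $S\leq Q$ is not faithful, so there exist $x\neq y$ in $S$ with $L_x=L_y$. Evaluating this equality at $x$ and $y$ and using idempotence ($xx=x$, $yy=y$) yields $xy=yy=y$ and $yx=xx=x$. Then $T=\{x,y\}$ is closed under the multiplication, and because $xy=y$ and $yx=x$ the left divisions satisfy $x\backslash y=y$, $y\backslash x=x$ (and of course $x\backslash x=x$, $y\backslash y=y$), so $T$ is closed under $\backslash$ as well. Hence $T\in \mathcal{S}(Q)$, and its multiplication table is exactly $ab=b$, i.e.\ $T\cong \mathcal{P}_2$, contradicting the hypothesis. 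The only delicate point I foresee is being careful that $L_x=L_y$ on $Q$ restricts to an equality on the subalgebra $S$ (it does, trivially) so that the evaluations at $x,y\in S$ are legitimate; beyond this, the argument is entirely routine.
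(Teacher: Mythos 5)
Your proof is correct and follows essentially the same route as the paper, which simply outsources each item to a citation ((i) to \cite[Lemma 1.1]{LTT}, (iii) to \cite[Lemma 1.9]{Super}) and notes for (ii) that $\mathcal{P}_2$ is not faithful; your direct argument for (iii) is exactly the characterization via the implication $xy=y,\ yx=x \Rightarrow x=y$ that the paper records immediately after the lemma as \eqref{super implication}. The one "delicate point" you flag is stated backwards — the issue is that non-faithfulness of $S$ only gives $L_x|_S=L_y|_S$ rather than equality on all of $Q$ — but since you only evaluate at $x,y\in S$ this does not affect the argument.
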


\begin{proof}
(i) The statement follows by the same proof of \cite[Lemma 1.1]{LTT}.

(ii) The left quasigroup $\mathcal{P}_2$ is not faithful.

(iii) It follows by \cite[Lemma 1.9]{Super}.
\end{proof}

%

According to Lemma \ref{lemma on super} latin left quasigroups are superfaithful and the class of superfaithful idempotent left quasigroups is the class of idempotent left quasigroups such that the following implication holds:
\begin{equation}\label{super implication}
xy=y \, \text{ and } \, yx=x\,\Rightarrow \, x=y.
\end{equation}

Let $Q$ be an idempotent left quasigroup $Q$. Then $Fix(L_x)=\{x\}$ for every $x\in Q$ if and only if the implication
\begin{equation}\label{Fix}
xy=y \,\Rightarrow \, x=y
\end{equation}
holds. The class of idempotent left quasigroups satisying \eqref{super implication} (resp. \eqref{Fix}) is closed under subalgebras and direct products as it is a quasi-variety \cite{Burris} and extensions. Indeed assume that $Q/\alpha$ and $[x]$ are is such class and $xy=y$. Then $[x][y]=[y]$. So $[x]=[y]$ and thus $x=y$. 

\begin{corollary}\label{P_2 and idempotent}
Let $Q$ be an idempotent left quasigroup. 
\begin{itemize}
\item[(i)] If right multiplications are injective, then $Fix(L_x)=\{x\}$ for every $x\in Q$.

\item[(ii)] If $Fix(L_x)=\{x\}$ for every $x\in Q$ then $Q$ is superfaithful.
\end{itemize}

\begin{proof}

(i) Assume that $xy=y$. Then $R_y(x)=xy=y=yy=R_y(y)$ and so $x=y$.

(ii) Clear since \eqref{Fix} implies \eqref{super implication}.
\end{proof}

\end{corollary}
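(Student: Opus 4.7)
The plan is to settle the two parts independently, exploiting idempotence together with the characterisation of superfaithfulness already recorded before the statement.

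For part (i) I would fix $x\in Q$, pick any $y\in Fix(L_x)$, and try to produce two distinct arguments on which $R_y$ takes the same value. The only two candidates naturally available are $x$ and $y$ themselves: the hypothesis $xy=y$ reads $R_y(x)=y$, while idempotence gives $R_y(y)=yy=y$. Hence $R_y(x)=R_y(y)$, and injectivity of $R_y$ forces $x=y$; the reverse inclusion $\{x\}\subseteq Fix(L_x)$ is idempotence. This is essentially the same trick as in Lemma \ref{lemma on super}(i), but with the right factor on both sides made equal by idempotence rather than by a direct computation.

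For part (ii) I would simply compare the two quasi-identities. The remark immediately preceding the corollary says that an idempotent left quasigroup is superfaithful if and only if it satisfies \eqref{super implication}. But \eqref{Fix} is strictly stronger than \eqref{super implication}: from $xy=y$ alone one already concludes $x=y$, so a fortiori the weaker conjunction $xy=y$ \emph{and} $yx=x$ does. Therefore the hypothesis of (ii) implies \eqref{super implication} in $Q$, and superfaithfulness follows from the cited characterisation.

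There is no real obstacle here: both statements follow by a one-line observation once one keeps in mind that superfaithfulness, in the idempotent setting, has been axiomatised by the quasi-identity \eqref{super implication} (so no separate verification on subalgebras is needed, since quasi-identities are preserved under taking subalgebras). The only point to be careful about is not to confuse the statement of (i) with the implication \eqref{Fix} itself — (i) is exactly the claim that \eqref{Fix} holds for $Q$, extracted from injectivity of the $R_y$, while (ii) then uses it as a formal hypothesis.
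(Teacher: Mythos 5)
Your proof is correct and follows exactly the paper's argument: part (i) is the same computation $R_y(x)=xy=y=yy=R_y(y)$ followed by injectivity of $R_y$, and part (ii) is the same observation that the implication \eqref{Fix} is formally stronger than \eqref{super implication}, which characterises superfaithfulness for idempotent left quasigroups. Nothing to add.
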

%

According to Lemma \ref{lemma on super}(iii), superconnected idempotent left quasigroups are superfaithful. In particular we have the following.

\begin{lemma}\label{O onto super}
Let $Q$ be a superconnected idempotent left quasigroup. Then $\mathcal{O}_{\dis_\alpha}=\mathcal{O}_{\dis^\alpha}=\alpha=\c{\dis_\alpha}=\c{\dis^\alpha}$ for every $\alpha\in Con(Q)$.
\end{lemma}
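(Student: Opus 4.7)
The chain
$$\mathcal{O}_{\dis_\alpha}\leq \mathcal{O}_{\dis^\alpha}\leq \alpha\leq \c{\dis_\alpha}\leq \c{\dis^\alpha}$$
is already recorded in the excerpt, so the task reduces to establishing the two outer inequalities $\alpha\leq \mathcal{O}_{\dis_\alpha}$ and $\c{\dis^\alpha}\leq \alpha$. My plan is to handle the ``orbit'' side with Lemma \ref{blocks connected} and the ``Cayley'' side via the Cayley kernel of the factor $Q/\alpha$ together with Remark \ref{remark on lambda}(iii). No new calculations are needed; the proof is purely an assembly of results already proved.

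For the first inequality, I would use that $Q$ is idempotent, so every block $[x]_\alpha$ is a subalgebra of $Q$. Since $Q$ is superconnected, each such subalgebra is connected, i.e.\ the blocks of $\alpha$ are connected. Lemma \ref{blocks connected} then gives $\alpha=\mathcal{O}_{\dis_\alpha}=\mathcal{O}_{\dis^\alpha}$, which provides $\alpha\leq \mathcal{O}_{\dis_\alpha}$ and, in conjunction with the chain above, equality of the first three entries.

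For the remaining inequality $\c{\dis^\alpha}\leq \alpha$, I would pass to the quotient. The class of superconnected left quasigroups is closed under homomorphic images, so $Q/\alpha$ is an idempotent superconnected left quasigroup, and therefore superfaithful by the remark immediately preceding the lemma (which is itself a consequence of Lemma \ref{lemma on super}(iii)). In particular $\lambda_{Q/\alpha}=0_{Q/\alpha}$. By Remark \ref{remark on lambda}(iii), $\lambda_{Q/\alpha}=\c{\dis^\alpha}/\alpha$, and the identity $\c{\dis^\alpha}/\alpha=0_{Q/\alpha}$ translates back to $\c{\dis^\alpha}\leq \alpha$. Combined with the chain, this forces $\alpha=\c{\dis_\alpha}=\c{\dis^\alpha}$ and finishes the proof.

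The only ``conceptual'' step is the second one: one must recognise that superconnectedness is precisely the strengthening of connectedness that survives the passage to the quotient $Q/\alpha$, and hence guarantees faithfulness of $Q/\alpha$; without that closure property one would only know that $Q$ itself is faithful, which is not enough to conclude $\c{\dis^\alpha}=\alpha$. Once this observation is in place, everything else is a direct invocation of prior lemmas.
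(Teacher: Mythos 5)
Your proof is correct and follows essentially the same route as the paper's: connectedness of the blocks (via idempotence and superconnectedness) combined with Lemma \ref{blocks connected} for the orbit equalities, and faithfulness of the factor $Q/\alpha$ combined with Remark \ref{remark on lambda}(iii) for the Cayley-kernel equalities. Your version is slightly more explicit about why $Q/\alpha$ is faithful (closure of superconnectedness under homomorphic images plus Lemma \ref{lemma on super}(iii)), which the paper compresses into ``$Q$ and its factors are faithful.''
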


\begin{proof}
Let $\alpha\in Con(Q)$. The blocks of $\alpha$ are connected, so according to Lemma \ref{blocks connected} we have $\alpha=\mathcal{O}_{\dis_\alpha}=\mathcal{O}_{\dis^\alpha}$. According to Remark \ref{remark on lambda}(iii) we have that $\lambda_{Q/\alpha}=\c{\dis^\alpha}/\alpha$. The left quasigroup $Q$ and its factors are faithful, and so $\alpha=\c{\dis^\alpha}$.
%
%
\end{proof}

\section{Semiregular left quasigroups}\label{Semiregular section}

\subsection{Semiregular left quasigroups}

 We say that a left quasigroup $Q$ is {\it semiregular} if $\dis(Q)$ is semiregular on $Q$. A class of semiregular left quasigroups is the class of {\it principal quandles} studied in \cite{Principal}. Some of the results that hold for semiregular quandles can be easily extended to left quasigroups.

 It is easy to see that the class of semiregular left quasigroups is closed under subalgebras and direct products but it is not closed under homomorphic images. For instance, the quandle {\tt SmallQuandle(12,1)} in the RIG database of GAP is semiregular, but it has a factor that is not semiregular \cite{GAP4}. The next lemma characterizes semiregular factors of a left quasigroup.
 
\begin{lemma}\label{semiregular factor}
Let $Q$ be a left quasigroup and $\alpha$ be a congruence of $Q$. 
\begin{itemize}
\item[(i)] $Q/\alpha$ is semiregular if and only if $\dis^\alpha=\dis(Q)_{[x]}$ for every $x\in Q$.
\item[(ii)]  Let $\beta= \bigwedge_{i\in I} \alpha_i\in Con(Q)$. If $Q/\alpha_i$ is semiregular for every $i\in I$, then $Q/\beta$ is semiregular.
\end{itemize}
\end{lemma}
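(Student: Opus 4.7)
The plan is to reduce both parts to the general formula $\dis^{\alpha}=\bigcap_{[x]\in Q/\alpha}\dis(Q)_{[x]_{\alpha}}$ from \eqref{eq for dis}, together with the behaviour of $\pi_{\alpha}$ described in Lemma \ref{pi} and Proposition \ref{p:dis_alpha1}.

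For (i), I would first unpack the statement ``$Q/\alpha$ is semiregular'' as $\dis(Q/\alpha)_{[x]_{\alpha}}=1$ for every $x\in Q$. Using that $\pi_{\alpha}$ maps $\dis(Q)$ onto $\dis(Q/\alpha)$ with kernel $\dis^{\alpha}$ (Lemma \ref{pi}), and the identity
\[ \dis(Q)_{[x]_{\alpha}}=\pi_{\alpha}^{-1}\bigl(\dis(Q/\alpha)_{[x]_{\alpha}}\bigr) \]
given in the definition of blockwise stabilizer, the triviality of $\dis(Q/\alpha)_{[x]_{\alpha}}$ is equivalent to $\dis(Q)_{[x]_{\alpha}}=\dis^{\alpha}$. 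Requiring this for every $x\in Q$ gives exactly the stated equality. The inclusion $\dis^{\alpha}\leq \dis(Q)_{[x]_{\alpha}}$ already holds by \eqref{eq for dis}, so only the reverse inclusion carries content.

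For (ii), assuming $Q/\alpha_{i}$ is semiregular for every $i\in I$, part (i) supplies $\dis^{\alpha_{i}}=\dis(Q)_{[x]_{\alpha_{i}}}$ for all $x\in Q$ and $i\in I$. Using Proposition \ref{p:dis_alpha1}(i) to compute $\dis(Q)_{[x]_{\beta}}$ as an intersection of the $\dis(Q)_{[x]_{\alpha_{i}}}$, and Proposition \ref{p:dis_alpha1}(ii) to compute $\dis^{\beta}$ as the corresponding intersection of the $\dis^{\alpha_{i}}$, one obtains
\[ \dis(Q)_{[x]_{\beta}}=\bigcap_{i\in I}\dis(Q)_{[x]_{\alpha_{i}}}=\bigcap_{i\in I}\dis^{\alpha_{i}}=\dis^{\beta} \]
for every $x\in Q$, so $Q/\beta$ is semiregular by part (i).

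No step looks hard: the whole proof is an exercise in bookkeeping of the operators $\dis^{*}$ and $\dis(Q)_{[x]_{*}}$ and how they interact with $\pi_{\alpha}$. The only place one has to be mildly careful is in (i), where one must not confuse $\dis(Q)_{[x]_{\alpha}}$ (the blockwise stabilizer, which is a subgroup of $\dis(Q)$) with $\dis(Q/\alpha)_{[x]_{\alpha}}$ (a point stabilizer in the factor), and should cite the kernel description of $\pi_{\alpha}$ explicitly to pass between them.
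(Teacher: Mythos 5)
Your proof of part (i) is correct and is essentially the paper's own argument: both reduce semiregularity of $Q/\alpha$ to $\dis(Q/\alpha)_{[x]}=1$ and pass through the identity $\dis(Q)_{[x]_\alpha}=\pi_\alpha^{-1}\bigl(\dis(Q/\alpha)_{[x]}\bigr)$ together with the fact that $\pi_\alpha$ restricted to $\dis(Q)$ is onto $\dis(Q/\alpha)$ with kernel $\dis^\alpha$; your remark that only the inclusion $\dis(Q)_{[x]_\alpha}\leq\dis^\alpha$ carries content is exactly right. For part (ii) you take a genuinely different route. The paper observes that $Q/\beta$ embeds subdirectly into $\prod_{i\in I}Q/\alpha_i$ and invokes the closure of the class of semiregular left quasigroups under subalgebras and direct products (stated, without detailed proof, just before the lemma). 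You instead stay entirely inside the lattice of subgroups of $\dis(Q)$: combining part (i) with the intersection formulas $\dis(Q)_{[x]_\beta}=\bigcap_i\dis(Q)_{[x]_{\alpha_i}}$ and $\dis^\beta=\bigcap_i\dis^{\alpha_i}$ from Proposition \ref{p:dis_alpha1}(i)--(ii) gives $\dis(Q)_{[x]_\beta}=\dis^\beta$ directly. Both arguments are valid. The paper's is shorter once the closure properties are granted, but those properties are themselves only asserted as ``easy to see''; your version is more self-contained, makes part (ii) an actual corollary of part (i), and uses only machinery already proved earlier in the section, at the cost of being slightly more bookkeeping-heavy.
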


\begin{proof}
(i) Since $\dis^\alpha\leq \dis(Q)_{[x]}=\pi_\alpha^{-1}(\dis(Q/\alpha)_{[x]})$, we have that $\dis(Q/\alpha)_{[x]}=1$ if and only if $\dis(Q)_{[x]}= \dis^\alpha$. 
%
%
%
%
%
%

(ii) Let $\beta=\bigwedge_{i\in I}\alpha_i$. Then $Q/\beta$ embeds into the direct product of semiregular left quasigroups $\prod_{i\in I} Q/\alpha_i$ and so it is semiregular as well.
%
\end{proof}

For semiregular left quasigroups, the property of being faithful and superfaithful collapse.

\begin{lemma}\label{lemma1}
Let $Q$ be a semiregular left quasigroup. The following are equivalent: 
\begin{itemize}
\item[(i)] $Q$ is faithful.
\item[(ii)] The right multiplication mappings of $Q$ are injective. 
\item[(iii)] $Q$ is superfaithful.
\end{itemize}

\end{lemma}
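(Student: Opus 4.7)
The plan is to close a cycle (ii) $\Rightarrow$ (iii) $\Rightarrow$ (i) $\Rightarrow$ (ii). The first two implications are essentially free: (ii) $\Rightarrow$ (iii) is just Lemma \ref{lemma on super}(i), and (iii) $\Rightarrow$ (i) is immediate since $Q$ is a subalgebra of itself. All the content sits in (i) $\Rightarrow$ (ii), where I need to exploit semiregularity.

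For (i) $\Rightarrow$ (ii), suppose $yx = zx$ for some $x,y,z \in Q$. Rewriting this as $L_y(x) = L_z(x)$ gives $L_y L_z^{-1}(x) = x$. Since $L_y L_z^{-1} \in \dis(Q)$ by definition, this element lies in the point stabilizer $\dis(Q)_x$. Semiregularity of $\dis(Q)$ forces $\dis(Q)_x = 1$, hence $L_y L_z^{-1} = 1$, i.e.\ $L_y = L_z$. By faithfulness (i.e.\ $\lambda_Q = 0_Q$), this yields $y = z$, so $R_x$ is injective.

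The main conceptual point is that in a semiregular left quasigroup the Cayley kernel and the "right-regular kernel" (the relation $yx = zx$) coincide: an element of $\dis(Q)$ that fixes a single point must be the identity, which collapses the a priori weaker equality of values $L_y(x) = L_z(x)$ into the global equality $L_y = L_z$. The only potential obstacle is verifying that $L_y L_z^{-1}$ genuinely belongs to $\dis(Q)$ (not just to $\lmlt(Q)$), but this is immediate from the definition of the displacement group as containing all such products. No appeal to the idempotent hypothesis or to deeper structural lemmas is needed.
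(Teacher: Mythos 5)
Your proof is correct and follows essentially the same route as the paper's: all the content is in (i) $\Rightarrow$ (ii), where the hypothesis $yx=zx$ places a displacement in a point stabilizer, semiregularity forces that element to be the identity, and faithfulness then gives $y=z$. One small slip worth noting: from $L_y(x)=L_z(x)$ the element fixing $x$ is $L_z^{-1}L_y$ (still in $\dis(Q)$ by normality of the displacement group), whereas $L_yL_z^{-1}$ fixes $zx$ rather than $x$ --- but either choice works, since semiregularity makes \emph{every} point stabilizer of $\dis(Q)$ trivial.
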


\begin{proof}
The implication (iii) $\Rightarrow$ (i) is clear and (ii) $\Rightarrow$ (iii) is Lemma \ref{lemma on super}(i).

(i) $\Rightarrow$ (ii) Let $xz=yz$. Then $L_y^{-1} L_x\in \dis(Q)_z=1$ and so $L_x=L_y$. Thus $x=y$.
\end{proof}

\begin{lemma}\label{semiregular idempotent are quandles}
Semiregular idempotent left quasigroups are quandles.
\end{lemma}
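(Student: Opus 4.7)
The plan is to show that in a semiregular idempotent left quasigroup $Q$, the left translation $L_x$ is an automorphism of $Q$ for every $x\in Q$; together with idempotence this forces $Q$ to be a quandle. The identity $L_x\in \Aut(Q)$ is equivalent to $L_x L_y L_x^{-1}=L_{xy}$ for every $y\in Q$, so the goal reduces to showing that the element
\[
h=L_x L_y L_x^{-1} L_{xy}^{-1}
\]
is trivial in $\lmlt(Q)$.

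To use semiregularity I need two things: $h\in \dis(Q)$ and $h$ fixes some point of $Q$. For the first, I would decompose
\[
h=\bigl(L_x L_y L_x^{-1} L_y^{-1}\bigr)\cdot \bigl(L_y L_{xy}^{-1}\bigr).
\]
The right-hand factor is a generator of $\dis(Q)$. Writing $g=L_x L_y^{-1}\in \dis(Q)$ and substituting $L_x=g L_y$ into the commutator, a direct calculation gives $[L_x,L_y]=g\,(L_y g^{-1} L_y^{-1})$, which lies in $\dis(Q)$ because $\dis(Q)$ is normal in $\lmlt(Q)$. Hence $h\in \dis(Q)$.

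For the second point, I evaluate $h$ at $xy$ using idempotence twice: since $(xy)(xy)=xy$ one has $L_{xy}^{-1}(xy)=xy$; then $L_x^{-1}(xy)=y$; then $L_y(y)=yy=y$; and finally $L_x(y)=xy$. So $h(xy)=xy$. By semiregularity of $\dis(Q)$, the stabilizer $\dis(Q)_{xy}$ is trivial, hence $h=1$, which gives $L_xL_yL_x^{-1}=L_{xy}$ for all $x,y\in Q$. This is exactly the self-distributive law $x(yz)=(xy)(xz)$, so $Q$ is a rack; being idempotent, $Q$ is a quandle.

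The only mildly delicate step is the verification $h\in \dis(Q)$, since this cannot be read off directly from the generators of $\dis(Q)$ but requires using normality to absorb the conjugation. All other steps are short calculations directly exploiting idempotence and semiregularity.
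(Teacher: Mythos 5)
Your proof is correct and follows essentially the same route as the paper: both arguments show that $h=L_xL_yL_x^{-1}L_{xy}^{-1}$ lies in $\dis(Q)$ and fixes $xy$ (using idempotence), then invoke semiregularity to conclude $h=1$. The only difference is that the paper cites \cite[Lemma 1.4]{LTT} for the membership $h\in\dis(Q)$, whereas you verify it directly via the decomposition $h=[L_x,L_y]\,(L_yL_{xy}^{-1})$ and normality of $\dis(Q)$ in $\lmlt(Q)$ --- a correct, self-contained substitute for that citation.
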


\begin{proof}
Let $Q$ be an idempotent semiregular left quasigroup and $x,y\in Q$. According to \cite[Lemma 1.4]{LTT} $L_x L_y L_x^{-1} L_{xy}^{-1}\in \dis(Q)$. Moreover
$$L_x L_y L_x^{-1} L_{xy}^{-1}(xy)=xy$$
and so $L_x L_y L_x^{-1} L_{xy}^{-1}\in \dis(Q)_{xy}=1$. Therefore $L_{xy}=L_x L_y L_x^{-1}$, i.e. $Q$ is a quandle.
\end{proof}
\subsection{The equivalence $\sigma_Q$}

Let $Q$ be a left quasigroup and $N\trianglelefteq \lmlt(Q)$. We define an equivalence relation as
$$x \, \sigma_N\, y \text{ if and only if } N_x=N_y.$$

\begin{lemma}
Let $Q$ be a left quasigroup and $N\trianglelefteq \lmlt(Q)$. The blocks of $\sigma_N$ are blocks with respect to the action of $\lmlt(Q)$. If $Q$ is idempotent they are subalgebras of $Q$.
\end{lemma}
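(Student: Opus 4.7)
The plan is to exploit the normality of $N$ in $\lmlt(Q)$ to show that $\sigma_N$ is preserved by the action of $\lmlt(Q)$ on $Q$, which forces its blocks to be blocks of that action; then in the idempotent case I will use that each $L_y$ with $y$ in a block $B$ fixes $y$, hence must fix $B$ setwise.

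First I would establish the identity
\[
N_{g(x)}\;=\;g\,N_x\,g^{-1}
\]
for every $g\in\lmlt(Q)$ and every $x\in Q$. This is immediate: $h\in N_{g(x)}$ means $h(g(x))=g(x)$, equivalently $g^{-1}hg(x)=x$; using $g^{-1}hg\in N$ (which is where normality of $N$ enters) this says $g^{-1}hg\in N_x$, i.e.\ $h\in gN_xg^{-1}$. Consequently, if $x\,\sigma_N\,y$ then $N_x=N_y$ and so
\[
N_{g(x)}=gN_xg^{-1}=gN_yg^{-1}=N_{g(y)},
\]
giving $g(x)\,\sigma_N\,g(y)$. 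Thus every element of $\lmlt(Q)$ maps $\sigma_N$-classes into $\sigma_N$-classes.

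Now let $B=[x]_{\sigma_N}$ and take any $g\in\lmlt(Q)$ with $g(B)\cap B\neq\emptyset$; pick $z\in B$ with $g(z)\in B$. For an arbitrary $y\in B$ one has $y\,\sigma_N\,z$, hence $g(y)\,\sigma_N\,g(z)\in B$, so $g(y)\in B$. This shows $g(B)\subseteq B$, and since $g$ is a bijection on $Q$ and $\sigma_N$-classes form a partition, $g(B)=B$. Therefore $B$ is a block of the $\lmlt(Q)$-action on $Q$.

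Finally, assume $Q$ is idempotent and fix a block $B=[x]_{\sigma_N}$. For any $y\in B$ we have $L_y(y)=yy=y\in B$, so $L_y(B)\cap B\neq\emptyset$; by the previous paragraph $L_y(B)=B$, and consequently $L_y^{-1}(B)=B$ as well. This means that for every $y,z\in B$ both $yz=L_y(z)\in B$ and $y\backslash z=L_y^{-1}(z)\in B$, so $B$ is closed under both basic operations and is a subalgebra of $Q$. The only slightly delicate point is the passage from ``$\sigma_N$ is $\lmlt(Q)$-invariant'' to ``blocks are blocks of the action'', but this is a standard elementary argument for equivalence relations preserved by a group action.
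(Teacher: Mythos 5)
Your proof is correct and follows essentially the same route as the paper's: normality of $N$ gives $N_{g(x)}=gN_xg^{-1}$, so $\lmlt(Q)$ permutes the $\sigma_N$-classes, and in the idempotent case $L_y^{\pm 1}$ fixes $y$ and therefore stabilizes its class. Your explicit justification that an invariant equivalence relation has blocks as classes, and the passage through $L_y(B)\cap B\neq\emptyset$, just make explicit what the paper leaves implicit.
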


\begin{proof}
Let $x\, \sigma_N \, y$, i.e. $ N_x=N_y$ and $h\in \lmlt(Q)$. The group $N$ is normal in $\lmlt(Q)$, so we have $N_{h(x)}=h N_x h^{-1}=h N_y h^{-1} = N_{h(y)}$. Hence $h(x)\, \sigma_N\, h(y)$ and so the classes of $\sigma_N$ are blocks with respect to the action of $\lmlt(Q)$. If in addition $Q$ is idempotent, then $$N_{L_x^{\pm 1}(y)}=L_x^{\pm 1} N_y L_x^{\mp 1}=L_x^{\pm 1} N_x L_x^{\mp 1}=N_{L_x^{\pm 1}(x)}=N_x,$$ so $[x]_{\sigma_N}$ is a subalgebra of $Q$. 
\end{proof}

 In \cite{CP} we already introduced the equivalence relation $\sigma_{\dis(Q)}$ and we denoted it simply by $\sigma_Q$. 
Semiregularity of $Q$ is captured by the equivalence $\sigma_Q$. Indeed $Q$ is semiregular if and only if $\sigma_Q=1_Q$.

Such a relation is related to central congruences of left quasigroups (see \cite{CP}) and its properties have been investigated for quandles in \cite{Principal}. Some of its properties can be extended to idempotent left quasigroups.

\begin{proposition}\label{semiregular decomposition}
Let $Q$ be an idempotent left quasigroup. The classes of $\sigma_Q$ are semiregular subquandles of $Q$. In particular every idempotent left quasigroup is a disjoint union of semiregular quandles.
\end{proposition}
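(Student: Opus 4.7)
The plan is as follows. Fix a block $B=[x]_{\sigma_Q}$. Since $\dis(Q)\trianglelefteq \lmlt(Q)$ and $Q$ is idempotent, the lemma preceding the statement already tells us that $B$ is a subalgebra of $Q$; it is therefore an idempotent left quasigroup in its own right, and it remains to show that $\dis(B)$ acts semiregularly on $B$, for then Lemma \ref{semiregular idempotent are quandles} will upgrade $B$ to a quandle.

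The first step is a routine lifting. For every $y\in B$, the restriction $L_y|_B$ is exactly $L_y^B$, the left translation of $B$; consequently, each generator $L_y^B(L_z^B)^{-1}$ of $\dis(B)$ is the restriction to $B$ of $L_yL_z^{-1}\in \dis(Q)$, and each element of $\lmlt(B)$ used for conjugation in the normal closure is the restriction to $B$ of the corresponding word in the $L_y$, $y\in B$, inside $\lmlt(Q)$. Because restriction to an invariant subset is a group homomorphism from the setwise stabilizer of $B$ in $\lmlt(Q)$ onto $\lmlt(B)$, every $h\in \dis(B)$ admits a lift $\tilde h\in \dis(Q)$ with $\tilde h|_B=h$.

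The semiregularity argument is then immediate. Suppose $h\in \dis(B)$ fixes some $w\in B$ and pick such a lift $\tilde h\in \dis(Q)$. Then $\tilde h\in \dis(Q)_w$, and by the very definition of $\sigma_Q$ all points of $B$ share the stabilizer $\dis(Q)_w$, so $\tilde h$ fixes $B$ pointwise and hence $h=\mathrm{id}_B$. This proves that $B$ is semiregular, and Lemma \ref{semiregular idempotent are quandles} forces $B$ to be a quandle. The ``in particular'' clause is then free: $\sigma_Q$ is an equivalence relation, so its classes partition $Q$ into the claimed family of semiregular quandles. The only delicate point is the lifting step; what makes it go through is the elementary fact that conjugation commutes with restriction to an invariant set, so that the normal-closure definition of $\dis(B)$ inside $\lmlt(B)$ is matched element by element by a subset of $\dis(Q)$.
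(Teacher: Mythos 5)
Your proof is correct and follows essentially the same route as the paper's: you lift each element of $\dis([x]_{\sigma_Q})$ to an element of $\dis(Q)$ stabilizing the block (the paper does this via the groups $H=\langle L_y,\,y\in[x]_{\sigma_Q}\rangle$ and its normal closure $D$), then use that all points of a $\sigma_Q$-class share the same stabilizer in $\dis(Q)$ to conclude semiregularity, and finally invoke Lemma \ref{semiregular idempotent are quandles}. No gaps; the only cosmetic point is that restriction maps the setwise stabilizer onto a subgroup \emph{containing} $\lmlt(B)$, but since every generator of $\dis(B)$ visibly lifts into $\dis(Q)$, the argument is unaffected.
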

\begin{proof}

Let $H=\langle L_y,\, y\in [x]_{\sigma_Q}\rangle$. The action of the displacement group of $[x]_{\sigma_Q}$ is the action of the group $D=\langle h L_y L_x^{-1} h^{-1}, \,y\in [x]_{\sigma_Q},\, h\in H\rangle $ restricted to $[x]_{\sigma_Q}$. So if $h\in \dis([x]_{\sigma_Q})$, then $h=g|_{[x]_{\sigma_Q}}$ for 
some $g\in D$. If $h(x)=x$ then $h(y)=g(y)=y$ for every $y\in [x]_{\sigma_Q}$, i.e. $h=1$ and $[x]_{\sigma_Q}$ is semiregular. According to Lemma \ref{semiregular idempotent are quandles} the classes of $\sigma_Q$ are semiregular quandles.
\end{proof}

Let $Q$ be a left quasigroup, $x\in Q$ and let $\widetilde{N_x}=N_{\dis(Q)}(N_x)$. Then $[x]^{\dis(Q)}\cap [x]_{\sigma_N}=x^{\widetilde{N_x}}$. Indeed $h(x)\, \sigma_N\, x$ if and only if $N_{h(x)}=h N_x h^{-1} =N_x$, i.e. $h\in \widetilde{N_x}$. The equivalence $\sigma_N$ can be trivial and in this case $\widetilde{N_x}=N_x$ and $Z(\dis(Q))=1$ ($Z(\dis(Q))\leq \widetilde{N_x}$ for every $x\in Q$).

\begin{corollary}\label{decomposition}
Let $Q$ be a connected idempotent left quasigroup and $N\in \N(Q)$. Then $[x]_{\sigma_N}=x^{\widetilde{N_x}}$ for every $x\in Q$. 
\end{corollary}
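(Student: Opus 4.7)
The plan is to reduce the corollary immediately to the identity $[x]^{\dis(Q)}\cap [x]_{\sigma_N}=x^{\widetilde{N_x}}$ established in the paragraph just before the statement. That identity already holds for any normal subgroup $N\trianglelefteq\lmlt(Q)$, using nothing more than the fact that $N_{h(x)}=hN_xh^{-1}$ when $N$ is normal; since $N\in\N(Q)$ in particular satisfies $N\trianglelefteq\lmlt(Q)$, we get this identity for free.

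The remaining task is therefore to remove the intersection with $[x]^{\dis(Q)}$, i.e.\ to show that $[x]^{\dis(Q)}=Q$ for every $x\in Q$. This is exactly the statement that $\dis(Q)$ acts transitively on $Q$, and it is where the hypotheses are used: $Q$ is idempotent and connected, so by \cite[Proposition 3.6]{Maltsev_paper} (cited earlier in the paper when superconnectedness is discussed) $Q$ is connected by $\dis(Q)$. Thus $x^{\dis(Q)}=Q$ and consequently $[x]^{\dis(Q)}\cap [x]_{\sigma_N}=[x]_{\sigma_N}$.

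Putting the two observations together yields
\[
[x]_{\sigma_N}\;=\;[x]^{\dis(Q)}\cap [x]_{\sigma_N}\;=\;x^{\widetilde{N_x}},
\]
which is the desired equality. There is really no obstacle here: the only non-trivial input is the transitivity of $\dis(Q)$, which is imported from \cite{Maltsev_paper}, and the characterization of $[x]^{\dis(Q)}\cap [x]_{\sigma_N}$ in terms of the normalizer $\widetilde{N_x}$, which was just derived above the statement. So the proof is a one-line consequence once these two facts are cited.
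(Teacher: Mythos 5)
Your proof is correct and follows the paper's own argument: the paper likewise reduces to the identity $[x]^{\dis(Q)}\cap[x]_{\sigma_N}=x^{\widetilde{N_x}}$ established just before the statement and then uses that a connected idempotent left quasigroup is connected by $\dis(Q)$, so the intersection with $x^{\dis(Q)}=Q$ disappears. You are in fact slightly more explicit than the paper in citing why $\dis(Q)$ is transitive, which is a welcome clarification rather than a deviation.
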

\begin{proof}
Since $Q$ is connected, $x^{\widetilde{N_x}}=[x]_{\sigma_N}\cap x^{\dis(Q)}=[x]_{\sigma_N}$. 
\end{proof}

\subsection{Semiregular admissible subgroups}\label{semiregular subgroups}

Let us consider admissible semiregular subgroups. Note that, if $Q$ is a left quasigroup and $N$ is an admissible subgroup, then $N$ is semiregular if and only if $\sigma_N=1_Q$. Indeed, if $N_x=1$ for every $x\in Q$, we have $\sigma_N=1_Q$. On the other hand, if $\sigma_N=1_Q$ and $h\in N_x$ then $h(y)=y$ for every $y\in N$. Thus $h=1$ and so $N$ is semiregular.

\begin{lemma}\label{semiregul groups_0}
Let $Q$ be a left quasigroup, $N\in Norm^\prime(Q)$ and $\alpha=\mathcal{O}_N$. If $N$ is semiregular, then $\dis(Q)_{[x]_{\alpha}} \cong N\rtimes \dis(Q)_x$ and $\dis^{\alpha}\cong N\rtimes \dis^{\alpha}_x$ for every $x\in Q$.
%
%
%
%
%
%
%
%
%
%
%
\end{lemma}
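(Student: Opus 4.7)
The plan is to exhibit the desired semidirect product by first writing $\dis(Q)_{[x]_\alpha}$ as an internal product $N \cdot \dis(Q)_x$, then checking the hypotheses (normality, trivial intersection) using semiregularity of $N$, and finally intersecting with $\dis^\alpha$ to obtain the second isomorphism.

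First I would observe that since $N \in \N'(Q)$ we have $N \leq \dis(Q)$, and the $\alpha$-classes are by definition the $N$-orbits, so every element of $N$ stabilizes each block $[x]_\alpha$ setwise. Combined with \eqref{eq for dis}, this gives $N \leq \dis^\alpha \leq \dis(Q)_{[x]_\alpha}$ for every $x \in Q$. Moreover, $N$ is normal in $\lmlt(Q)$, hence normal in the overgroups $\dis(Q)_{[x]_\alpha}$ and $\dis^\alpha$.

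Next, given $h \in \dis(Q)_{[x]_\alpha}$, the element $h(x)$ lies in $[x]_\alpha = N\cdot x$, so there exists $n \in N$ with $n(x) = h(x)$. Semiregularity of $N$ forces this $n$ to be unique (if $n'(x) = n(x)$, then $n^{-1}n' \in N_x = 1$), and then $g := n^{-1} h$ fixes $x$, so $g \in \dis(Q)_x$. This shows $\dis(Q)_{[x]_\alpha} = N \cdot \dis(Q)_x$. For the intersection, any element of $N \cap \dis(Q)_x$ lies in $N_x$, which is trivial by semiregularity. Together with normality of $N$, this yields the internal semidirect product decomposition $\dis(Q)_{[x]_\alpha} \cong N \rtimes \dis(Q)_x$.

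For the second isomorphism, I would restrict the same argument to $\dis^\alpha$. Take $h \in \dis^\alpha$ and write $h = n g$ with $n \in N$ and $g \in \dis(Q)_x$ as above. Since $N \leq \dis^\alpha$ and $h \in \dis^\alpha$, we get $g = n^{-1} h \in \dis^\alpha$, so $g \in \dis^\alpha \cap \dis(Q)_x = \dis^\alpha_x$. Thus $\dis^\alpha = N \cdot \dis^\alpha_x$; normality of $N$ in $\dis^\alpha$ is inherited from $\lmlt(Q)$, and $N \cap \dis^\alpha_x \leq N \cap \dis(Q)_x = 1$, giving $\dis^\alpha \cong N \rtimes \dis^\alpha_x$.

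The only subtle point is confirming that $N$ sits inside $\dis^\alpha$, and not merely inside each individual blockwise stabilizer $\dis(Q)_{[x]_\alpha}$; this is what allows the factor $g$ to automatically belong to $\dis^\alpha$ in the second decomposition. Everything else is a routine application of the fact that a normal subgroup with trivial intersection with a complement produces a semidirect product, with semiregularity of $N$ providing both the complementation (via unique choice of $n$) and the triviality of the intersection.
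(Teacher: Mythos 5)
Your proof is correct and follows essentially the same route as the paper: the paper likewise notes $N\leq \dis^\alpha\leq \dis(Q)_{[x]_\alpha}$, uses regularity of $N$ on the block to obtain coset representatives for $\dis(Q)_x$ (hence $\dis(Q)_{[x]_\alpha}=N\,\dis(Q)_x$ with $N\cap \dis(Q)_x=1$), and repeats the argument inside $\dis^\alpha$. Your write-up just makes the coset decomposition and the containment $N\leq\dis^\alpha$ more explicit.
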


\begin{proof}
Let $x\in Q$. Recall that $\dis_{\alpha}\leq N\leq \dis^\alpha\leq \dis(Q)_{[x]}$ and $\dis(Q)_x\leq \dis(Q)_{[x]}$. The subgroup $N$ is a normal subgroup of $\BlocS{Q}{x}$. The group $N$ is regular on $[x]$, so the elements of $N$ are representatives of the cosets with respect to $\dis(Q)_x$ and so $\BlocS{Q}{x}=N \dis(Q)_x$. Moreover $N\cap \dis(Q)_x=1$, therefore the block stabilizer splits as a semidirect product of the $N$ and $\dis(Q)_x$. The same argument shows that a similar decomposition holds for $\dis^{\alpha}$.
%
%
%
%
%
%
%
%
%
 %
\end{proof}

\begin{lemma}\label{semiregul groups}
Let $Q$ be a finite faithful left quasigroup, $N\in Norm^\prime(Q)$ and $\alpha=\mathcal{O}_N$. If $N$ is semiregular, then:
\begin{itemize}

\item[(i)] $\alpha=\c{N}$ and $N=\dis_{\alpha}=\{L_y L_x^{-1}, y \in [x]\}$ for every $x\in Q$.

\item[(ii)] If $[x]\in  E(Q/\alpha)$, then $[x]$ is a semiregulae latin left quasigroup and $N \cong \dis([x])$.
\end{itemize}

\end{lemma}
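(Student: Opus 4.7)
My plan for (i) is a counting argument. Since $N\in\N'(Q)$ is admissible with $\alpha=\mathcal{O}_N$, we already have $\dis_\alpha\leq N$ and $\alpha\leq\c{N}$. Because $N$ acts semiregularly on the finite set $Q$, every orbit $[x]=x^N$ has exactly $|N|$ elements. On the other hand, for each $y\in[x]$ the element $L_y L_x^{-1}$ lies in $\dis_\alpha\leq N$ and, by faithfulness of $Q$, the assignment $y\mapsto L_y L_x^{-1}$ is injective. Thus $\{L_y L_x^{-1}:y\in[x]\}\subseteq\dis_\alpha\leq N$ is a set of $|[x]|=|N|$ elements, and all three sets must coincide. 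For $\c{N}\leq\alpha$, given $L_u L_v^{-1}\in N$ one writes $L_u L_v^{-1}=L_z L_v^{-1}$ for some $z\in[v]$, so $L_u=L_z$ and faithfulness forces $u=z\in[v]$.

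For (ii), since $[x]\in E(Q/\alpha)$ the block $[x]$ is a subalgebra of $Q$ and $L_y|_{[x]}=L_y^{[x]}$ for every $y\in[x]$; moreover $N$ stabilises $[x]$ setwise (it is the orbit $x^N$), so restriction yields a group homomorphism $\rho:N\to\sym([x])$. Semiregularity of $N$ on $[x]$ makes $\rho$ injective, and part (i) gives $\rho(L_y L_x^{-1})=L_y^{[x]}(L_x^{[x]})^{-1}\in\dis([x])$, so $\rho(N)\leq\dis([x])$. For the reverse inclusion I would observe that every generator $L_{y}^{[x]}$ of $\lmlt([x])$ is the restriction of $L_y\in\lmlt(Q)$, which preserves $[x]$ setwise; hence each element of $\lmlt([x])$ lifts to an element of $\lmlt(Q)$ preserving $[x]$, and the normality of $N$ in $\lmlt(Q)$ transfers to normality of $\rho(N)$ in $\lmlt([x])$. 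Since $\rho(N)$ already contains all the generators $L_y^{[x]}(L_z^{[x]})^{-1}=\rho\bigl((L_y L_x^{-1})(L_z L_x^{-1})^{-1}\bigr)$ of $\dis([x])$ and is normal in $\lmlt([x])$, it must contain $\dis([x])$. Therefore $N\cong\rho(N)=\dis([x])$, and semiregularity of $[x]$ follows because $\dis([x])$ acts on $[x]$ exactly as $N$ does.

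To conclude that $[x]$ is latin, I would show that each right multiplication $R_b^{[x]}$ is injective on the finite set $[x]$. Given $b\in[x]$, set $a=xb\in[x]$; then for $y\in[x]$ one computes $L_y L_x^{-1}(a)=yb$, so $yb=y'b$ implies that $L_y L_x^{-1}$ and $L_{y'} L_x^{-1}$ (both in $N$) agree at $a$. Semiregularity of $N$ forces them to coincide, whence $L_y=L_{y'}$ and $y=y'$ by faithfulness. The main obstacle in this scheme is the transfer of normality in the proof of (ii); once one verifies cleanly that the lifts of elements of $\lmlt([x])$ to $\lmlt(Q)$ preserve $[x]$ (which uses the subalgebra property), the rest reduces to combining (i) with standard generation arguments.
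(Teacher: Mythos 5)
Your proof is correct and follows essentially the same route as the paper: part (i) rests on the injectivity of $y\mapsto L_yL_x^{-1}$ (faithfulness) combined with semiregularity and finiteness, and part (ii) identifies $\dis([x])$ with the restriction of $N$ to the block and uses the trivial-stabilizer argument for both semiregularity and the injectivity of right multiplications. The only differences are cosmetic: you phrase (i) as a counting argument where the paper exhibits an explicit bijection $n\mapsto L_{n(x)}L_x^{-1}$, and in (ii) you obtain $\rho(N)=\dis([x])$ by two inclusions (using the transfer of normality to $\lmlt([x])$) where the paper computes the generating set of $\dis([x])$ directly as a restriction of $N$.
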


\begin{proof}

(i) Let $x\in Q$ and let $\varphi$ be the mapping:
\begin{equation*}
\varphi :N\longrightarrow \dis_{\mathcal{O}_N}\leq N, \quad n\mapsto  L_{n(x)} L_{x}^{-1}.
\end{equation*}
If $L_{n(x)}L_x^{-1}=\varphi(n)=\varphi(m)=L_{m(x)} L_x^{-1}$ then $n(x)=m(x)$ since $Q$ is faithful. The subgroup $N$ is semiregular and so $n=m$. Therefore $\varphi$ is bijective and so $N=\setof{L_{n(x)} L_x^{-1}}{n\in N} \leq \dis_{\alpha}$. Since it is always the case that 
 $ \dis_{\alpha} \leq N$, we can conclude that $N=\dis_{\alpha}$. So, if $y\, \c{N}\, x$ (i.e. $L_y L_x^{-1}\in N$) then $y=n(x)$ for some $n\in N$. Thus $\mathcal{O}_N=\c{N}$.


(ii) If $[x]\in E(Q/\alpha)$, the block $[x]$ is a subalgebra. Let $H=\langle L_y,\, y\in [x]\rangle$. The subgroup $N$ is normal in $\lmlt(Q)$ and so by item (i) $N=\langle h L_y L_x^{-1} h^{-1},\, x\in [y],\, h\in H\rangle=\setof{L_y L_x^{-1}}{y\in [x]}$. So, we have 
$$\dis([x])\cong \langle h L_y L_x^{-1}h^{-1}|_{[x]}, h\in H, y\in [x]\rangle|_{[x]}=N|_{[x]}.$$

 Since the group $N$ is semiregular, then $K=\bigcap_{y\in [x]} N_y=1$ and so $\dis([x])\cong N|_{[x]}\cong N/K=N$. In particular $[x]$ is semiregular.

Assume that $yx=zx$ for $y,z\in [x]$. Then $L_y^{-1} L_z\in (\dis_\alpha)_x=1$ and using that $Q$ is faithful we have $y=z$. Therefore the subalgebra $[x]$ is latin. 
\end{proof}

\begin{corollary}\label{semiregul groups_2}
Let $Q$ be a finite faithful idempotent left quasigroup and let $N\in Norm^\prime (Q)$ be semiregular. Then:
\begin{itemize}
\item[(i)] the blocks of $\mathcal{O}_N$ are semiregular latin subquandles of $Q$ and $N$ is solvable.
\item[(ii)] If $Q/\alpha$ is superfaithful (resp. superconnected, resp. $Fix(L_{[x]})=\{[x]\}$ for every $[x]\in Q/\alpha$) then $Q$ is superfaithful (resp. superconnected, resp. $Fix(L_{[x]})=\{x\}$ for every $x\in Q$). 
\end{itemize}
\end{corollary}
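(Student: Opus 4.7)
For part (i) the plan is to lift Lemma \ref{semiregul groups}(ii) blockwise. Since $Q$ is idempotent, every block $[x]_\alpha$ belongs to $E(Q/\alpha)$, so that lemma applies to each block and produces a semiregular latin subalgebra with $N\cong\dis([x]_\alpha)$; Lemma \ref{semiregular idempotent are quandles} then upgrades each block to a quandle. To deduce solvability of $N$, I will exhibit a fixed-point-free automorphism of $N$ by conjugating with $L_{x_0}$ for any chosen base point $x_0\in[x]_\alpha$: if $L_{x_0}nL_{x_0}^{-1}=n$ with $n\in N$, then evaluating at $x_0$ and using $L_{x_0}(x_0)=x_0$ yields $n(x_0)\in Fix(L_{x_0})\cap[x_0]_\alpha$; the latin character of $[x_0]_\alpha$ combined with Corollary \ref{P_2 and idempotent}(i) forces $n(x_0)=x_0$, whence $n=1$ by regularity of $N$ on $[x_0]_\alpha$ (Lemma \ref{semiregul groups_0}). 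Invoking the classical theorem that a finite group admitting a fixed-point-free automorphism is solvable then completes (i).

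For part (ii) I first verify that $Q/\alpha$ is faithful under each of the three hypotheses: superfaithful gives faithful trivially, the Fix condition via Corollary \ref{P_2 and idempotent}(ii), and superconnected via Lemma \ref{lemma on super}(iii) since $\mathcal{P}_2$, being disconnected, cannot embed into a superconnected algebra. For the superfaithful implication, take a subalgebra $S\leq Q$ and $x,y\in S$ with $L_x|_S=L_y|_S$; projecting to $\pi_\alpha(S)$ gives $x\,\alpha\,y$ by faithfulness downstairs, and then $L_xL_y^{-1}\in N$ by Lemma \ref{semiregul groups}(i) fixes $S$ pointwise because $L_y$ permutes $S$; semiregularity of $N$ yields $L_x=L_y$, and faithfulness of $Q$ gives $x=y$. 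The Fix implication is analogous: $xy=y$ forces $[x]=[y]$ by the downstairs Fix hypothesis, hence $x\,\alpha\,y$, and then $L_xL_y^{-1}\in N$ fixes $y$ by idempotence, so semiregularity and faithfulness of $Q$ conclude.

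For the superconnected case, given $S\leq Q$ and $x,y\in S$, I exploit connectedness of $\pi_\alpha(S)$ to lift a word of left multiplications to some $h\in\lmlt(S)$ with $h(x)\,\alpha\,y$. Then $h(x),y\in[y]_\alpha\cap S$, a subalgebra of the finite latin left quasigroup $[y]_\alpha$ (Lemma \ref{semiregul groups}(ii)); closure under the basic operations together with finiteness makes $[y]_\alpha\cap S$ itself a finite latin left quasigroup, hence superconnected by the remark in Section \ref{preliminary} on finite latin left quasigroups. Some word in $\lmlt([y]_\alpha\cap S)\subseteq\lmlt(S)$ then carries $h(x)$ to $y$, so $S$ is connected. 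The main obstacle is the solvability part of (i): the remaining implications are routine manipulations of earlier lemmas, while solvability rests on the nontrivial theorem that finite groups admitting fixed-point-free automorphisms are solvable.
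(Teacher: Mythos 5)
Your proposal is correct, and for the structural claims of (i) it follows the paper exactly: each block lies in $E(Q/\alpha)$ by idempotency, so Lemma \ref{semiregul groups}(ii) makes it a semiregular latin subalgebra with $N\cong\dis([x]_\alpha)$, and Lemma \ref{semiregular idempotent are quandles} upgrades it to a quandle. Where you genuinely diverge is the solvability of $N$ and the treatment of (ii). The paper gets solvability in one step from the isomorphism $N\cong\dis([x])$: the block is a finite latin quandle, and displacement groups of finite latin quandles are solvable by the cited theorem of Stein. You instead note that conjugation by $L_{x_0}$ is an automorphism of $N$ (normality in $\lmlt(Q)$) whose only fixed element is $1$ --- the computation $n(x_0)\in Fix(L_{x_0})\cap [x_0]_\alpha=\{x_0\}$ via the latin block and Corollary \ref{P_2 and idempotent}(i), followed by semiregularity, is sound --- and then invoke the theorem that a finite group admitting a fixed-point-free automorphism is solvable. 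That theorem is true, but calling it ``classical'' is misleading: only the prime-order case (Thompson) and a few small orders are classical, and the general statement rests on the classification of finite simple groups (equivalently, on results of Guralnick--Navarro type saying that a coset of a normal subgroup which is a single conjugacy class forces that subgroup to be solvable, which is precisely the configuration your fixed-point-freeness creates). So your route trades one CFSG-level citation (Stein) for another of comparable depth; its modest advantage is that this step only needs $Fix(L_{x_0})\cap[x_0]_\alpha=\{x_0\}$ rather than the full identification $N\cong\dis([x])$. For (ii), the paper argues in two lines: the three classes are closed under extensions (as quoted in Section \ref{preliminary}), and the blocks are finite latin quandles by (i), hence enjoy all three properties. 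Your proof unwinds that closure argument by hand, using $\alpha=\c{N}$ from Lemma \ref{semiregul groups}(i) together with semiregularity of $N$ for the superfaithful and Fix cases, and the finite latin blocks for superconnectedness; all steps check (the preliminary verification that $Q/\alpha$ is faithful is never used and can be dropped), so (ii) is a correct, more self-contained variant of the paper's argument.
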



\begin{proof}
(i) According to Proposition \ref{semiregul groups}, the blocks are latin semiregular subalgebras and so they are quandles by Lemma \ref{semiregular idempotent are quandles}. The subgroup $N$ is isomorphic to the displacement group of the blocks, that is solvable \cite[Theorem 1.3]{Stein2}.
%
%
%
%
%

(ii) Such properties are closed under extensions and the blocks of $\alpha$ are finite latin quandles (see item (i)), so they have such properties.
\end{proof}

%
%
%
%
%
%
%
%
%
%
%
%
%
%
%

\section{Nilpotent left quasigroups}\label{nilpotent section}

\subsection{Commutator theory}

The notion of commutator of congruences and the related concepts of center, solvability and nilpotency have been developed for arbitrary algebraic structure in \cite{comm}. 

Let $A$ be an algebraic structure and $\alpha,\beta,\delta\in Con(A)$. We say that \emph{$\alpha$ centralizes $\beta$ over $\delta$}, and write $C(\alpha,\beta;\delta)$, if for every $(n+1)$-ary term operation $t$, every pair $x\,\alpha\,y$ and every $z_1\,\beta\,u_1$, $\dots$, $z_n\,\beta\,u_n$ we have
\[  t(x,z_1,\dots,z_n)\,\delta \, t(x,u_1,\dots,u_n)\quad\text{implies}\quad t(y,z_1,\dots,z_n) \, \delta \, t(y,u_1,\dots,u_n). \]

The \emph{commutator} of $\alpha,\beta\in Con(A)$, denoted by $[\alpha,\beta]$, is the smallest congruence $\delta$ such that $C(\alpha,\beta;\delta)$. 
A congruence $\alpha$ is called:
\begin{itemize}
	\item \emph{abelian} if $C(\alpha,\alpha;0_A)$, i.e., if $[\alpha,\alpha]=0_A$,
	\item \emph{central} if $C(\alpha,1_A;0_A)$, i.e., if $[\alpha,1_A]=0_A$.
\end{itemize}
The \emph{center} of $A$, denoted by $\zeta_A$, is the largest congruence of $A$ such that $C(\zeta_A,1_A;0_A)$. An algebraic structure $A$ is called \emph{abelian} if $\zeta_A=1_A$, or, equivalently, if the congruence $1_A$ is abelian. We can define a series of congruence of $A$ as
$$\zeta_1(A)=\zeta_A,\qquad \zeta_{n+1}(A)/\zeta_{n}(A)=\zeta_{(A/\zeta_{n}(A))}$$
for every $n\in \mathbb{N}$. The algebraic structure $A$ is called \emph{nilpotent} of length $n$ if $\zeta_n(A)=1_A$.
%
%

In \cite{CP} we adapted the universal algebraic theory of commutators to the setting of racks and quandles. The main results of our work can be partially applied to the setting of left quasigroups.

\begin{lemma}\cite[Lemma 5.1]{CP}\label{from CP}
Let $Q$ be a left quasigroup, $\alpha,\beta$ its congruences. If $C(\alpha,\beta;0_Q)$ holds then $[\dis_{\alpha},\dis_{\beta}]=1$ and $\alpha\leq \sigma_{\dis_\beta}$.
\end{lemma}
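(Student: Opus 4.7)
The plan is to derive both assertions from a single mechanism. Every element of $\dis_\alpha$, written as a product of conjugates of generators $L_a L_b^{-1}$ with $a\,\alpha\,b$, can be represented by a term operation in its parameters whose action on a final variable $w$ collapses to $w$ as soon as each $\alpha$-related pair of parameters is equalized; this collapse is driven entirely by the left-quasigroup identity $x\cdot(x\backslash w)\approx w$ (together with $x\backslash(xw)\approx w$). An analogous description applies to $\dis_\beta$ using $\beta$-related pairs. In both parts I will then exhibit the desired identity as an instance where a single term takes equal values at two substitutions, and invoke $C(\alpha,\beta;0_Q)$ to transfer that equality along $\alpha$.

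For $\alpha\leq\sigma_{\dis_\beta}$, I pick $x\,\alpha\,y$ and $h\in(\dis_\beta)_x$ and aim to show $h(y)=y$. Write the action of $h$ as $h(w)=s(\vec{c},\vec{d},\vec{f},w)$ with $c_j\,\beta\,d_j$ componentwise and $s(\vec{d},\vec{d},\vec{f},w)\approx w$. The hypothesis $h(x)=x$ reads $s(\vec{c},\vec{d},\vec{f},x)=s(\vec{d},\vec{d},\vec{f},x)$. Treating $s$ as a term with $w$ in the leading position, the remaining arguments consist of the $\beta$-related pairs $(\vec{c},\vec{d})$ against $(\vec{d},\vec{d})$ together with $\vec{f}$ repeated trivially, so $C(\alpha,\beta;0_Q)$ applies and yields $s(\vec{c},\vec{d},\vec{f},y)=s(\vec{d},\vec{d},\vec{f},y)=y$, that is $h(y)=y$. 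Swapping the roles of $x$ and $y$ gives $(\dis_\beta)_x=(\dis_\beta)_y$, so $x\,\sigma_{\dis_\beta}\,y$.

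For $[\dis_\alpha,\dis_\beta]=1$, I fix $g\in\dis_\alpha$, $h\in\dis_\beta$ and $z\in Q$ and aim to show $ghg^{-1}h^{-1}(z)=z$. Writing $g(w)=r(\vec{a},\vec{b},\vec{e},w)$ with $r(\vec{b},\vec{b},\vec{e},w)\approx w$, and analogous representations for $g^{-1}, h, h^{-1}$ built from the same $\alpha$-pairs $\vec{a},\vec{b}$ and $\beta$-pairs $\vec{c},\vec{d}$, the permutation $[g,h]$ takes the form $T(\vec{a},\vec{b},\vec{c},\vec{d},\vec{e},\vec{f},z)$ for a single term $T$ enjoying two collapse identities: $T(\vec{b},\vec{b},\vec{c},\vec{d},\vec{e},\vec{f},z)\approx z$ (setting $\vec{a}=\vec{b}$ collapses $g,g^{-1}$ to the identity, leaving $hh^{-1}$) and $T(\vec{a},\vec{b},\vec{d},\vec{d},\vec{e},\vec{f},z)\approx z$ (symmetric). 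At the substitution $\vec{a}\mapsto\vec{b}$ the two $\beta$-related tuples $(\vec{c},\vec{d})$ and $(\vec{d},\vec{d})$ therefore both output $z$, so iterating $C(\alpha,\beta;0_Q)$ across the coordinates of $\vec{a}$ transfers the equality up to $\vec{a}$ and gives $T(\vec{a},\vec{b},\vec{c},\vec{d},\vec{e},\vec{f},z)=T(\vec{a},\vec{b},\vec{d},\vec{d},\vec{e},\vec{f},z)=z$.

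The main obstacle is the bookkeeping needed to package the composition of products of conjugates as a single term $T$ and to verify that the two collapse identities hold as genuine term identities rather than only pointwise equalities at the specific chosen parameter values; these follow by a straightforward induction on word length using only the left-quasigroup axioms. The repeated application of $C(\alpha,\beta;0_Q)$ across several $\alpha$-coordinates is routine since at each step the remaining $\alpha$-coordinates can be treated as trivially $\beta$-related to themselves, satisfying the hypothesis of the single-variable term condition.
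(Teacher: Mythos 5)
Your argument is correct and is essentially the mechanism behind the cited result \cite[Lemma 5.1]{CP} (the present paper gives no proof of its own, it only quotes it): write each element of $\dis_\alpha$, resp.\ $\dis_\beta$, as a term operation in its parameters, observe that equalizing the $\alpha$- (resp.\ $\beta$-) related parameter pairs collapses the permutation to the identity, and then transfer the resulting equalities along $\alpha$ with $C(\alpha,\beta;0_Q)$, applied one $\alpha$-coordinate at a time with the remaining coordinates held fixed as trivially $\beta$-related parameters. The only remark worth adding is that the collapse facts are needed merely pointwise at the chosen parameter values (so the ``genuine term identity'' concern is moot), and that the bookkeeping for the commutator part could be lightened by treating only generators $L_xL_y^{-1}$ of $\dis_\alpha$ and using normality of $\dis_\beta$ in $\lmlt(Q)$, though your version as written goes through.
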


\begin{corollary}\label{corollary from CP}
Let $Q$ be a left quasigroup and $\alpha\leq \zeta_Q$. Then $\dis_\alpha$ is central in $\dis(Q)$ and $\alpha\leq \sigma_Q$. 
\end{corollary}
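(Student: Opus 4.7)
The plan is to derive this as a direct application of Lemma \ref{from CP} to the pair $(\alpha, 1_Q)$. First I would note that by the very definition of the center, $C(\zeta_Q, 1_Q; 0_Q)$ holds. Since the centralization relation $C(-,\beta;\delta)$ is monotone in its first argument (a standard property from \cite{comm}, immediate from the fact that weakening $\alpha$ to a smaller congruence only restricts the set of pairs $x\,\alpha\,y$ that need to be checked), the hypothesis $\alpha \leq \zeta_Q$ yields $C(\alpha, 1_Q; 0_Q)$.

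Next, I would apply Lemma \ref{from CP} with $\beta = 1_Q$. The hypothesis $C(\alpha, 1_Q; 0_Q)$ gives $[\dis_\alpha, \dis_{1_Q}] = 1$ and $\alpha \leq \sigma_{\dis_{1_Q}}$. To conclude, I invoke the conventions set in the preliminaries: $\dis_{1_Q}$ is by definition $\dis(Q)$, and $\sigma_Q$ is defined (following \cite{CP}) as $\sigma_{\dis(Q)}$. Substituting these, the two conclusions become respectively that $\dis_\alpha$ commutes elementwise with all of $\dis(Q)$, i.e., $\dis_\alpha \leq Z(\dis(Q))$, and that $\alpha \leq \sigma_Q$, as desired.

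There is no real obstacle here: the corollary is essentially a relabeling of Lemma \ref{from CP} in the special case $\beta = 1_Q$, combined with the observation that $\alpha$ can be replaced by any congruence below $\zeta_Q$ thanks to monotonicity of $C$ in the first argument. The only thing worth flagging explicitly in the write-up is the monotonicity step, since the rest is pure substitution of definitions.
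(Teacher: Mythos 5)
Your proof is correct and is exactly the intended argument: the paper states this as an immediate consequence of Lemma \ref{from CP} applied with $\beta=1_Q$, using $C(\zeta_Q,1_Q;0_Q)$, monotonicity of $C$ in the first argument, and the identifications $\dis_{1_Q}=\dis(Q)$ and $\sigma_{\dis(Q)}=\sigma_Q$. Nothing to add.
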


According to Lemma \ref{from CP}, abelian left quasigroups are semiregular, indeed we have the following Corollary applying directly Lemma \ref{lemma1} and Lemma \ref{semiregular idempotent are quandles}.

\begin{corollary}\label{corollary for abelian}
Let $Q$ be a abelian left quasigroup. 
\begin{itemize}
\item[(i)] If $Q$ is idempotent then $Q$ is a quandle.
\item[(ii)] If $Q$ is finite and faithful then $Q$ is latin.
\end{itemize}
\end{corollary}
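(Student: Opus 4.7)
The plan is to first establish the pivotal fact that every abelian left quasigroup is semiregular, and then read both statements as immediate consequences of the earlier lemmas about semiregular left quasigroups.

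First I would unwind the hypothesis: by definition, $Q$ abelian means $\zeta_Q = 1_Q$, equivalently $C(1_Q,1_Q;0_Q)$ holds. I would then apply Corollary \ref{corollary from CP} with $\alpha = 1_Q$ to obtain $1_Q \leq \sigma_Q$, which forces $\sigma_Q = 1_Q$. Since a left quasigroup is semiregular precisely when $\sigma_Q = 1_Q$, this shows that $\dis(Q)$ acts semiregularly on $Q$. (Alternatively, Lemma \ref{from CP} applied directly with $\alpha = \beta = 1_Q$ yields the same conclusion together with the commutativity of $\dis(Q)$, but semiregularity is all that is needed here.)

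For (i), once $Q$ is known to be semiregular, Lemma \ref{semiregular idempotent are quandles} immediately gives that an idempotent semiregular left quasigroup is a quandle, settling the claim with no further work.

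For (ii), semiregularity together with the hypothesis that $Q$ is faithful places us in the setting of Lemma \ref{lemma1}, which says that for semiregular $Q$, faithfulness is equivalent to the injectivity of all right multiplications $R_x$. Finiteness of $Q$ then upgrades injectivity of each $R_x$ to bijectivity, so every $R_x$ is a permutation and $Q$ is latin by definition.

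There is no real obstacle: the heavy lifting is already done in Corollary \ref{corollary from CP}, Lemma \ref{lemma1}, and Lemma \ref{semiregular idempotent are quandles}. The only subtle point is the translation between the commutator-theoretic definition of \emph{abelian} and the set-theoretic condition $\sigma_Q = 1_Q$; once that bridge is crossed, both (i) and (ii) are one-line citations.
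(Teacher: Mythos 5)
Your proposal is correct and follows exactly the paper's intended argument: the paper derives semiregularity of abelian left quasigroups from Lemma \ref{from CP} (equivalently, Corollary \ref{corollary from CP} with $\alpha=1_Q$ giving $\sigma_Q=1_Q$), and then cites Lemma \ref{semiregular idempotent are quandles} for (i) and Lemma \ref{lemma1} for (ii), with finiteness upgrading injective right multiplications to bijections. Nothing is missing.
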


\subsection{Central Congruences}

In this paper we exploit the results of the Section \ref{semiregular subgroups} to study central congruences. The key observation is the following: let $Q$ be a left quasigroup connected by $\dis(Q)$ and $N\leq Z(\dis(Q))$, then $N$ is semiregular. And so, in particular if $\alpha$ is a central congruence then $\dis_\alpha$ is semiregular (see Corollary \ref{corollary from CP}). For this reason the results of this section are stated for left quasigroups connected by their displacement group (and so they cover connected idempotent left quasigroup and left quasigroups with a Malt'sev term).

Central congruences are reflected by the structure of the displacement group.

\begin{lemma}\label{central orbits}
Let $Q$ be a left quasigroup connected by $\dis(Q)$, $\alpha$ be a central congruence and $\beta=\mathcal{O}_{\dis_\alpha}$. Then: 
\begin{itemize}
\item[(i)] $\dis(Q)_{[x]_{\beta}} \cong \dis_\alpha\times \dis(Q)_x$ and $\dis^\beta\cong \dis_\alpha\times \dis^\beta_{x}$ for every $x\in Q$. 
\item[(ii)] $\dis^\beta$ embeds into $\dis_\alpha^{Q/\alpha}$. In particular, $\dis^\beta$ is abelian.
\item[(iii)] If $Q$ is finite, $|\dis^\beta|$ divides $|[x]_\beta|^{|Q/\beta|}$ for every $x\in Q$.
\end{itemize}

\end{lemma}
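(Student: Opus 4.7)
The plan is to first extract, from the centrality of $\alpha$, the two inputs supplied by Corollary \ref{corollary from CP}, namely $\dis_\alpha\leq Z(\dis(Q))$ and $\alpha\leq \sigma_Q$. From the first of these, together with transitivity of $\dis(Q)$ on $Q$, I deduce that $\dis_\alpha$ is semiregular: if $n\in\dis_\alpha$ fixes some $x\in Q$, then for any $y=g(x)\in Q$ with $g\in\dis(Q)$, the centrality of $n$ gives $n(y)=g(n(x))=g(x)=y$, and faithfulness of the action of $\dis(Q)$ on $Q$ forces $n=1$. Consequently $\dis_\alpha$ is regular on each block of $\beta=\mathcal{O}_{\dis_\alpha}$, so $|[x]_\beta|=|\dis_\alpha|$, and $\beta\leq \alpha$ because $\dis_\alpha\leq \dis^\alpha$.

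For (i), I apply Lemma \ref{semiregul groups_0} to the semiregular admissible subgroup $N=\dis_\alpha$, whose orbit relation is precisely $\beta$. This yields $\dis(Q)_{[x]_\beta}\cong \dis_\alpha\rtimes \dis(Q)_x$ and $\dis^\beta\cong \dis_\alpha\rtimes \dis^\beta_x$. Since $\dis_\alpha$ lies in $Z(\dis(Q))$, both complements centralize $\dis_\alpha$, and so the semidirect products collapse into direct products.

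For (ii), the natural candidate map is $\phi:\dis^\beta\longrightarrow \dis_\alpha^{Q/\alpha}$ sending $h$ to the function $[y]_\alpha\mapsto n$, where $n$ is the unique element of $\dis_\alpha$ with $n(y)=h(y)$; such $n$ exists because $h\in\dis^\beta$ and $\beta\leq\alpha$ imply $h(y)\in [y]_\beta$, where $\dis_\alpha$ acts regularly. The critical point is that this $n$ does not depend on the representative of $[y]_\alpha$; here is where $\alpha\leq \sigma_Q$ is essential. Writing $h=n\cdot h'$ per the decomposition of (i), we have $h'\in \dis^\beta_y\leq \dis(Q)_y$, and centrality gives $\dis(Q)_y=\dis(Q)_{y'}$ for every $y'\in [y]_\alpha$, so $h'$ fixes the entire $\alpha$-class of $y$ pointwise, and $h(y')=n(y')$ for every $y'\in [y]_\alpha$. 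Injectivity of $\phi$ is then immediate from the faithful action of $\dis(Q)$ on $Q$, and the homomorphism property follows from the fact that $\dis_\alpha$, being semiregular, embeds into $\Sym([y]_\alpha)$ via restriction. Since $\dis_\alpha$ is abelian, the target $\dis_\alpha^{Q/\alpha}$ is abelian, and therefore so is $\dis^\beta$.

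Part (iii) is then a counting consequence of (ii): $|\dis^\beta|$ divides $|\dis_\alpha|^{|Q/\alpha|}=|[x]_\beta|^{|Q/\alpha|}$, which divides $|[x]_\beta|^{|Q/\beta|}$ because $\beta\leq\alpha$ gives $|Q/\alpha|\leq |Q/\beta|$. I expect the main obstacle to be the well-definedness of $\phi$ in (ii); the whole argument pivots on isolating $\alpha\leq \sigma_Q$ as the input that guarantees the complement $\dis^\beta_y$ acts trivially on the whole $\alpha$-class of $y$, not merely on the $\beta$-class, which is precisely what promotes a product indexed by $Q/\beta$ to one indexed by $Q/\alpha$.
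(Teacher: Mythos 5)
Your proof is correct and follows essentially the same route as the paper: semiregularity of the central subgroup $\dis_\alpha$, Lemma \ref{semiregul groups_0} plus centrality to turn the semidirect products of (i) into direct products, and a restriction-type embedding for (ii) based on the fact that the stabilizer component of $\dis^\beta$ acts trivially on the relevant classes. The only (welcome) difference is in (ii): by invoking $\alpha\leq\sigma_Q$ you get triviality of $\dis^\beta_y$ on whole $\alpha$-classes and hence the embedding into $\dis_\alpha^{Q/\alpha}$ exactly as stated, whereas the paper's own proof restricts to $\beta$-blocks and exhibits an embedding into $\dis_\alpha^{Q/\beta}$; either version gives abelianness and the divisibility in (iii).
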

%
%


\begin{proof}
(i) The congruence $\alpha$ is central and so the group $\dis_\alpha$ is central in $\dis(Q)$ by virtue of Corollary \ref{corollary from CP}. A central subgroup of a transitive group is semiregular, and then $\dis_\alpha$ is regular over the blocks of $\beta$ . According to Lemma \ref{semiregul groups_0} we have that $\dis(Q)_{[x]_\beta}\cong \dis_\alpha\rtimes \dis(Q)_x$ and $\dis^\beta=\dis_\alpha \rtimes \dis^\beta_x$ for every $x\in Q$. The subgroup $\dis_\alpha$ is central, therefore the semidirect product is actually a direct product.
%

(ii) The mapping
\begin{equation}\label{map_2}
\dis^\beta\longrightarrow \prod_{[x]\in Q/\beta} \sym_{[x]}, \quad h\mapsto \setof{h|_{[x]}}{[x]\in Q/\beta}
\end{equation}
is an injective group homomorphism. 
The congruence $\beta$ is central, so $\dis^\beta_x=\dis^\beta_y$ whenever $x\,\beta\, y$. Then the action of $\dis^\beta_x$ on the block of $x$ is trivial. Therefore $\dis^\beta|_{[x]_\beta}=\dis_\alpha|_{[x]_\beta}\cong \dis_\alpha$ 
and so the image of \eqref{map_2} is contained in $\dis_\alpha^{|Q/\beta|}$ which is abelian. 

(iii) If $Q$ is finite, $|[x]_\beta|=|\dis_\alpha|$ and so $|\dis^\beta|$ divides $|[x]|^{|Q/\beta|}$.
\end{proof}

\begin{corollary}\label{strucure_of_K_N}
Let $Q$ be a finite faithful left quasigroup connected by $\dis(Q)$ and $\alpha$ be a central congruence. Then: 
\begin{itemize}
\item[(i)] $\mathcal{O}_{\dis_\alpha}=\alpha=\c{\dis_\alpha}$.
\item[(ii)] $\dis(Q)_{[x]}\cong \dis_\alpha\times \dis(Q)_x$ and $\dis^\alpha \cong \dis_\alpha\times \dis^\alpha_x$ for every $x\in Q$.

\item[(iii)] $\dis^\alpha$ embeds into $\dis([x])^{Q/\alpha}$. In particular, $\dis^\alpha$ is abelian and $|\dis^\alpha|$ divides $|[x]|^{|Q/\alpha|}$ for every $x\in Q$.
\end{itemize}
\end{corollary}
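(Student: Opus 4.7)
The plan is to derive the corollary as a direct specialisation of Lemma \ref{central orbits} and Lemma \ref{semiregul groups}, using the bridging observation that under our hypotheses the subgroup $\dis_\alpha$ is semiregular and admissible. Concretely, since $\alpha$ is central, Corollary \ref{corollary from CP} yields $\dis_\alpha\leq Z(\dis(Q))$; because $Q$ is connected by $\dis(Q)$, the group $\dis(Q)$ is transitive on $Q$, and any central subgroup of a transitive permutation group is semiregular. Hence $\dis_\alpha\in \N'(Q)$ is semiregular, which is precisely the hypothesis needed to invoke the structural lemmas from Section \ref{semiregular subgroups} and Section \ref{nilpotent section}.

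For item (i), I would start from the general chain $\mathcal{O}_{\dis_\alpha}\leq \alpha\leq \c{\dis_\alpha}$ recorded in Section \ref{preliminary}. Applying Lemma \ref{semiregul groups}(i) with $N=\dis_\alpha$ (legitimate by the preceding paragraph, together with the standing assumption that $Q$ is finite and faithful) gives $\mathcal{O}_{\dis_\alpha}=\c{\dis_\alpha}$, forcing equality throughout and yielding (i).

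For item (ii), once (i) is established we have $\mathcal{O}_{\dis_\alpha}=\alpha$, so the congruence $\beta$ appearing in Lemma \ref{central orbits} coincides with $\alpha$. Part (i) of that lemma then gives the two direct product decompositions of $\dis(Q)_{[x]}$ and $\dis^\alpha$ verbatim.

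For item (iii), I would invoke Lemma \ref{central orbits}(ii) (again with $\beta=\alpha$) to obtain the embedding $\dis^\alpha\hookrightarrow \dis_\alpha^{Q/\alpha}$ and the consequent abelianness. To rewrite this embedding in terms of $\dis([x])$ as stated, use Lemma \ref{semiregul groups}(ii): the semiregularity of $\dis_\alpha$ together with faithfulness and finiteness yields $\dis_\alpha\cong \dis([x])$ on each $\alpha$-block that is a subalgebra, so $\dis^\alpha$ embeds into $\dis([x])^{Q/\alpha}$. The divisibility statement then follows either from Lemma \ref{central orbits}(iii) applied with $\beta=\alpha$, or directly by observing that the regular action of $\dis_\alpha$ on each block gives $|\dis_\alpha|=|[x]|$. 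I do not expect any genuine obstacle here, since every ingredient has already been prepared; the only care needed is the bookkeeping to see that the hypotheses of Lemma \ref{semiregul groups} and Lemma \ref{central orbits} are all simultaneously satisfied by $N=\dis_\alpha$.
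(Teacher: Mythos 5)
Your proposal is correct and follows essentially the same route as the paper: centrality of $\alpha$ plus transitivity of $\dis(Q)$ gives semiregularity of $\dis_\alpha$, Lemma \ref{semiregul groups}(i) collapses the chain $\mathcal{O}_{\dis_\alpha}\leq\alpha\leq\c{\dis_\alpha}$ for item (i), and Lemma \ref{central orbits} with $\beta=\mathcal{O}_{\dis_\alpha}=\alpha$ handles items (ii) and (iii). Your extra remark that the identification $\dis_\alpha\cong\dis([x])$ in (iii) rests on Lemma \ref{semiregul groups}(ii) is a detail the paper leaves implicit, but it is the right justification.
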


\begin{proof}
Let $x\in Q$. The subgroup $\dis_\alpha$ is a central subgroup of the transitive group $\dis(Q)$, see Corollary \ref{corollary from CP}. Therefore $\dis_{\alpha}$ is semiregular. According to Lemma \ref{semiregul groups}(ii), $\mathcal{O}_{\dis_\alpha}=\c{\dis_\alpha}$. The inclusion $\mathcal{O}_{\dis_\alpha}\leq\alpha\leq \c{\dis_\alpha}$ holds, then (i) holds.


For (ii) and (iii) we can apply Lemma \ref{central orbits} since $\alpha=\mathcal{O}_{\dis_\alpha}$.
%
%
\end{proof}

According to Corollary \ref{strucure_of_K_N}, we can apply Corollary \ref{semiregul groups_2} to central congruences of finite connected faithful idempotent left quasigroup.

\begin{corollary}\label{strucure_of_K_N_2}
Let $Q$ be a finite connected faithful idempotent left quasigroup and $\alpha$ be a central congruence. Then:
\begin{itemize}
\item[(i)] the blocks of $\alpha$ are affine latin quandles.
\item[(ii)] If $Q/\alpha$ is superfaithful (resp. superconnected, resp. $Fix(L_{[x]})=\{[x]\}$ for every $[x]\in Q/\alpha$) then $Q$ is superfaithful (resp. superconnected, resp. $Fix(L_{[x]})=\{x\}$ for every $x\in Q$). 
\end{itemize}
\end{corollary}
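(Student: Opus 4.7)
The plan is to invoke the semiregular framework built up in Section \ref{semiregular subgroups} for the specific admissible subgroup $N = \dis_\alpha$. Corollary \ref{strucure_of_K_N} already tells us that $\alpha = \mathcal{O}_{\dis_\alpha}$ and (via Corollary \ref{corollary from CP}) that $\dis_\alpha$ lies in the center of the transitive group $\dis(Q)$, hence is semiregular; thus $\dis_\alpha$ is a semiregular element of $\N'(Q)$ satisfying the hypotheses of Corollary \ref{semiregul groups_2}.

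Part (ii) then requires no further work: it is a direct translation of Corollary \ref{semiregul groups_2}(ii) applied to $N = \dis_\alpha$, which transfers superfaithfulness, superconnectedness, and the property $Fix(L_{[x]}) = \{[x]\}$ from $Q/\alpha$ back to $Q$.

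For part (i), Corollary \ref{semiregul groups_2}(i) already gives that each block $B = [x]_\alpha$ is a semiregular latin subquandle of $Q$. It remains to upgrade \emph{latin quandle} to \emph{affine quandle}. First I would show that $B$, regarded as a subalgebra, is abelian in the sense of commutator theory: given any $(n+1)$-ary term $t$, elements $a, a' \in B$ (so $a\,\alpha\,a'$), and tuples $\bar b, \bar b' \in B^n$ (automatically satisfying $b_i\,1_Q\,b_i'$), centrality $C(\alpha, 1_Q; 0_Q)$ yields that $t(a,\bar b) = t(a,\bar b')$ implies $t(a',\bar b) = t(a',\bar b')$. Restricting the quantifiers to $B$, this is exactly $C(1_B, 1_B; 0_B)$, so $B$ is an abelian idempotent left quasigroup. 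Once abelianness of $B$ is in hand, affineness follows from the classical fact that in any Mal'tsev variety the abelian algebras are affine modules; latinness of $B$ furnishes a Mal'tsev term explicitly, and Corollary \ref{corollary for abelian} confirms we are indeed in the quandle setting.

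I expect the main obstacle to be this last identification, converting abelianness of the block as an idempotent left quasigroup into an explicit affine representation. While standard in the quandle literature, it requires locating a Mal'tsev term and then coordinatizing $B$ over an abelian group by means of it. Everything else follows mechanically from Corollary \ref{strucure_of_K_N}, Corollary \ref{semiregul groups_2}, and Corollary \ref{corollary for abelian}.
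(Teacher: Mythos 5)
Your proposal is correct and follows essentially the same route as the paper: Corollary \ref{strucure_of_K_N} gives $\alpha=\mathcal{O}_{\dis_\alpha}$ with $\dis_\alpha$ a semiregular admissible subgroup, so Corollary \ref{semiregul groups_2} yields part (ii) and the semiregular latin quandle part of (i), with the remaining upgrade to affine coming from the blocks being abelian subalgebras under the central congruence. Your restriction argument for abelianness of the blocks, followed by coordinatization via the Mal'tsev term supplied by latinness, is precisely the step the paper leaves implicit (it would discharge it by the known fact, cited in the proof of Theorem \ref{nilpotent are latin}, that connected abelian quandles are affine), so the two arguments agree in substance.
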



Let us show a converse of Corollary \ref{strucure_of_K_N_2}(ii).

\begin{lemma}\label{proj sub}
	Let $Q$ be a finite connected idempotent left quasigroup and $\alpha$ be a central congruence. If $Fix(L_x)=\{x\}$ for every $x\in Q$ then $Fix(L_{[x]})=\{[x]\}$ for every $[x]\in Q/\alpha$.
\end{lemma}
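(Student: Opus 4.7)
The plan is to assume $L_{[x]}([y])=[y]$ in $Q/\alpha$, i.e.\ $xy\,\alpha\,y$, and exhibit a point $z\in [y]_\alpha$ with $L_x(z)=z$. Once such a $z$ exists, the hypothesis $Fix(L_x)=\{x\}$ forces $z=x$, hence $x\in [y]_\alpha$ and $[x]=[y]$. Note that $Fix(L_x)=\{x\}$ for every $x$ also implies, via Corollary \ref{P_2 and idempotent}(ii), that $Q$ is superfaithful, and in particular faithful, so Corollary \ref{strucure_of_K_N} and Corollary \ref{strucure_of_K_N_2} apply to the central congruence $\alpha$.

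The first main step is to decompose the action of $L_x$ on the block $[y]=[y]_\alpha$. By idempotency $L_y^{-1}(y)=y$, so
\[L_xL_y^{-1}(y)=L_x(y)=xy\,\alpha\,y,\]
which says $L_xL_y^{-1}\in \dis(Q)_{[y]_\alpha}$. Corollary \ref{strucure_of_K_N}(ii) gives the direct product decomposition $\dis(Q)_{[y]_\alpha}=\dis_\alpha\times \dis(Q)_y$, so I would write $L_xL_y^{-1}=h\,s$ with $h\in \dis_\alpha$ and $s\in \dis(Q)_y$ commuting. The key observation is that $s$ acts as the identity on $[y]$: since $\dis_\alpha$ is central in the transitive group $\dis(Q)$, it is semiregular, and hence acts regularly on each $\alpha$-block. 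So every $z\in [y]$ may be written uniquely as $z=k(y)$ with $k\in \dis_\alpha$, and then $s(z)=sk(y)=ks(y)=k(y)=z$. Consequently $L_x|_{[y]}=h\circ L_y|_{[y]}$.

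To conclude, I would use that by Corollary \ref{strucure_of_K_N_2}(i) the block $[y]$ is a finite affine latin quandle. Choosing coordinates so that $[y]$ is identified with an abelian group $A$ with $y$ as the neutral element, the operation takes the form $a\cdot b=(1-\phi)(a)+\phi(b)$ for some $\phi\in\Aut{A}$ such that $1-\phi$ is also bijective (this being equivalent to latinness). Under this identification $L_y|_{[y]}$ is multiplication by $\phi$, and every element of $\dis_\alpha$ acts on $[y]$ as a translation, so $h$ acts as $z\mapsto z+h_0$, where $h_0=h(y)\in A$. Therefore $L_x|_{[y]}$ is the affine map $z\mapsto \phi(z)+h_0$, and the equation $\phi(z)+h_0=z$ has the unique solution $z_0=(1-\phi)^{-1}(h_0)\in [y]$. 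Since $L_x(z_0)=z_0$, the hypothesis forces $z_0=x$, giving $x\in [y]$ and hence $[x]=[y]$.

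The main technical point is the transition from the abstract direct decomposition $\dis(Q)_{[y]}=\dis_\alpha\times \dis(Q)_y$ to the concrete statement that $L_x|_{[y]}$ is a $\dis_\alpha$-translate of $L_y|_{[y]}$, which rests on the triviality of the $\dis(Q)_y$-action on $[y]$ established in the second step. Once that is in place, the existence of the fixed point is an immediate consequence of the affine latin structure on the block supplied by Corollary \ref{strucure_of_K_N_2}(i).
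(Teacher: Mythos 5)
Your argument is correct, but it takes a genuinely different route from the paper's. The paper also starts from $L_xL_y^{-1}\in\dis(Q)_{[y]}$, but instead of splitting off the $\dis(Q)_y$-component and passing to affine coordinates, it uses the explicit description $\dis_\alpha=\setof{L_uL_y^{-1}}{u\in[y]}$ (Lemma \ref{semiregul groups}(i), available since $\dis_\alpha$ is central, hence semiregular, and $Q$ is faithful) together with $\alpha\leq\sigma_Q$ (Corollary \ref{corollary from CP}), to write $L_xL_y^{-1}=hL_uL_y^{-1}$ with $u\,\alpha\,y$ and $h\in\dis(Q)_y=\dis(Q)_u$; cancelling and evaluating at $u$ gives $xu=h(L_u(u))=h(u)=u$ by idempotency, so $x=u\,\alpha\,y$. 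This is shorter and never needs Corollary \ref{strucure_of_K_N_2}(i): the fixed point of $L_x$ in the block is produced purely group-theoretically, with no appeal to the affine latin structure. Your version buys an explicit formula for the fixed point, $z_0=(1-\phi)^{-1}(h_0)$, at the cost of the heavier input that the blocks are affine latin quandles; your intermediate step that $\dis(Q)_y$ acts trivially on $[y]$ (via regularity of $\dis_\alpha$ on the block and centrality of $\dis_\alpha$ in $\dis(Q)$) is essentially the paper's argument inside Lemma \ref{central orbits}(ii). One assertion you should justify in a line: that every element of $\dis_\alpha$ restricts to a translation of $[y]$. This is not automatic from the direct product decomposition alone, but it follows either from Lemma \ref{semiregul groups}(i), since $\dis_\alpha=\setof{L_zL_y^{-1}}{z\in[y]}$ and each $L_zL_y^{-1}$ acts on $[y]$, in coordinates with $y=0$, as $w\mapsto w+(1-\phi)(z)$; or from centrality, since $h|_{[y]}$ commutes with all translations of the block (these being restrictions of elements of $\dis(Q)$) and any permutation commuting with all translations of an abelian group is itself a translation. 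With that line added, your proof is complete.
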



\begin{proof}
Note that $Q$ is faithful. Assume that $[x][y]=[y]$. Then $L_x L_y^{-1}\in \dis(Q)_{[y]}$. According to Corollary \ref{strucure_of_K_N} the block stabilizer is the direct product of $\dis_\alpha=\setof{L_z L_y^{-1}}{z\in [y]}$ and the stabilizer of $y$ in $\dis(Q)$. Thus, there exists $u\,\alpha\,y$ and $h\in \dis(Q)_y=\dis(Q)_u$ such that $L_x L_y^{-1}=h L_u L_y^{-1}$. Then $L_x=h L_u\in \lmlt(Q)_u$ and accordingly $xu=u$. Therefore, $x=u\,\alpha\, y$ and so $Fix(L_{[x]})=\{[x]\}$.
\end{proof}

A rack $Q$ is nilpotent if and only if $\dis(Q)$ is nilpotent and if $Q$ is a connected quandle and $|Q|=p^n$ for some prime $p$, then $Q$ is nilpotent and $\dis(Q)$ is a $p$-group. Moreover, the displacement group of finite latin quandles is solvable \cite{Stein2, CP}. In general, the relationship between nilpotency of left quasigroups and nilpotency of the displacement group in unclear. On the other hand, under further assumptions some weaker results, as the following, hold.

\begin{corollary}	\label{solvable}
Let $Q$ be a superconnected idempotent left quasigroup. If $Q$ is nilpotent of length $n$ then $\dis(Q)$ is solvable of length at most $n$.
\end{corollary}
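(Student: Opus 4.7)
The plan is to induct on $k$ to prove the following claim: for $0\leq k\leq n$, the $k$-th derived subgroup $\dis(Q)^{(k)}$ is contained in $\dis^{\zeta_{n-k}(Q)}$, where we set $\zeta_0(Q)=0_Q$. Since $\dis(Q)\leq \sym(Q)$ acts faithfully on $Q$ by construction, the subgroup $\dis^{0_Q}$ is trivial, so the case $k=n$ delivers $\dis(Q)^{(n)}=1$, which is exactly solvability of length at most $n$. The base case $k=0$ is immediate because $\dis^{1_Q}=\dis(Q)$.

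For the inductive step, I would prove the key containment
$$[\dis^{\zeta_{n-k}(Q)},\,\dis^{\zeta_{n-k}(Q)}] \leq \dis^{\zeta_{n-k-1}(Q)}.$$
Consider the factor $R=Q/\zeta_{n-k-1}(Q)$. It is still an idempotent superconnected left quasigroup (the class is closed under homomorphic images), hence connected by $\dis(R)$. By the construction of the upper central series, the congruence $\alpha=\zeta_{n-k}(Q)/\zeta_{n-k-1}(Q)$ is central in $R$. Lemma \ref{O onto super} applied to $R$ yields $\mathcal{O}_{\dis_\alpha}=\alpha$, so Lemma \ref{central orbits}(ii), now with $\beta=\alpha$, tells us that $\dis^{\alpha}$ is abelian in $\dis(R)$. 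By Lemma \ref{pi}, $\dis^{\alpha}=\pi_{\zeta_{n-k-1}(Q)}(\dis^{\zeta_{n-k}(Q)})$, and since the kernel of $\pi_{\zeta_{n-k-1}(Q)}$ restricted to $\dis(Q)$ is $\dis^{\zeta_{n-k-1}(Q)}$, the displayed containment follows. Combining with the inductive hypothesis $\dis(Q)^{(k)}\leq \dis^{\zeta_{n-k}(Q)}$ one obtains $\dis(Q)^{(k+1)}\leq \dis^{\zeta_{n-k-1}(Q)}$, closing the induction.

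The main subtlety, rather than a genuine obstacle, is the bookkeeping around the two operators $\dis_\alpha$ and $\dis^\alpha$: Lemma \ref{central orbits}(ii) is stated with $\beta=\mathcal{O}_{\dis_\alpha}$, so to deduce abelianness of $\dis^\alpha$ itself one must know that $\beta$ coincides with $\alpha$. Superconnectedness of $R$, through Lemma \ref{O onto super}, delivers exactly this identification, and the remainder is a direct application of the Galois-connection machinery developed in Section \ref{preliminary}.
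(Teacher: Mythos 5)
Your proof is correct and follows essentially the same route as the paper: both arguments use that the quotients $Q/\zeta_j(Q)$ remain superconnected and idempotent, invoke Lemma \ref{O onto super} to identify the center with $\mathcal{O}_{\dis_{\zeta}}$ so that Lemma \ref{central orbits}(ii) gives abelianness of $\dis^{\zeta}$ at each stage, and then pull back along $\pi_\alpha$ using $\dis^\alpha$ as the kernel. The only difference is presentational: you unroll the paper's induction on nilpotency length into an explicit containment of the derived series $\dis(Q)^{(k)}\leq \dis^{\zeta_{n-k}(Q)}$, which also absorbs the abelian base case into the general step.
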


\begin{proof}
If $Q$ is abelian, then $\dis(Q)$ is abelian. Assume that $Q$ is nilpotent of length $n$. According to Lemma \ref{O onto super} $\zeta_Q=\mathcal{O}_{\dis_{\zeta_Q}}$ and so applying Lemma \ref{central orbits} we have that $\dis^{\zeta_Q}$ is abelian. The factor $Q/\zeta_Q$ is nilpotent of length $n-1$ and by induction $\dis(Q/\zeta_Q)$ is solvable of length at most $n-1$. Thus, $\dis(Q)$ is solvable of length at most $n$.
\end{proof}

Let $Q$ be a finite left quasigroup connected by $N$. Then we have that $|N|=|x^N||N_x|=|Q||N_x|$ for every $x\in Q$. If $p$ divides $|Q|$ then clearly $p$ divides $|N|$. Under further assumptions the converse holds.

\begin{proposition}\label{divisors}
Let $Q$ be a finite idempotent superconnected left quasigroup and let $p$ be a prime. If $Q$ is nilpotent, then $p$ divides $|Q|$ if and only if $p$ divides $|\dis(Q)|$. 
In particular, if $|Q|=p^n$ then $\dis(Q)$ is a $p$-group.
\end{proposition}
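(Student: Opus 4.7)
The plan is to induct on the nilpotency length $n$ of $Q$. One implication is immediate: a superconnected idempotent left quasigroup is connected by $\dis(Q)$ (by \cite[Proposition~3.6]{Maltsev_paper}), so $|Q|$ divides $|\dis(Q)|$ and hence any prime dividing $|Q|$ divides $|\dis(Q)|$. The substance of the proposition is the reverse implication $p\mid|\dis(Q)|\Rightarrow p\mid|Q|$; once this is proved the ``in particular'' statement is immediate, since if $|Q|=p^n$ then $p$ is the only prime available to divide $|\dis(Q)|$.

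For the base case $n=1$, the left quasigroup $Q$ is abelian, hence semiregular by Corollary~\ref{corollary from CP}; since $Q$ is connected by $\dis(Q)$, this group acts regularly, so $|\dis(Q)|=|Q|$ and there is nothing to prove.

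For the inductive step, assume $Q$ is nilpotent of length $n\geq 2$. The factor $Q/\zeta_Q$ is idempotent, superconnected (superconnectedness passes to homomorphic images) and nilpotent of length $n-1$, so the induction hypothesis applies to it, and by Lemma~\ref{pi} we have $|\dis(Q)|=|\dis(Q/\zeta_Q)|\cdot|\dis^{\zeta_Q}|$. Suppose $p\mid|\dis(Q)|$. If $p\mid|\dis(Q/\zeta_Q)|$, then by induction $p\mid|Q/\zeta_Q|$, which divides $|Q|$. Otherwise $p\mid|\dis^{\zeta_Q}|$; using that $Q$ is finite, faithful (superconnected idempotent implies superfaithful by Lemma~\ref{lemma on super}(iii)) and connected by $\dis(Q)$, Corollary~\ref{strucure_of_K_N}(iii) embeds $\dis^{\zeta_Q}$ into $\dis([x]_{\zeta_Q})^{|Q/\zeta_Q|}$, so $p\mid|\dis([x]_{\zeta_Q})|$. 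By Lemma~\ref{semiregul groups}(ii) applied to the central (hence semiregular) admissible subgroup $\dis_{\zeta_Q}$ and the idempotent block $[x]_{\zeta_Q}$, we get $\dis([x]_{\zeta_Q})\cong \dis_{\zeta_Q}$ acting regularly on the block, so $|\dis_{\zeta_Q}|=|[x]_{\zeta_Q}|$, which divides $|Q|$, giving $p\mid|Q|$. The only genuinely non-trivial case is $p\mid|\dis^{\zeta_Q}|$ with $p\nmid|\dis(Q/\zeta_Q)|$, handled precisely by the embedding in Corollary~\ref{strucure_of_K_N}(iii), which forces any such new prime to already lie in $|\dis_{\zeta_Q}|=|[x]_{\zeta_Q}|$.
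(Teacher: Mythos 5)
Your proof is correct and follows essentially the same route as the paper: induction on nilpotency length, the factorization $|\dis(Q)|=|\dis(Q/\zeta_Q)||\dis^{\zeta_Q}|$, regularity in the abelian base case, and in the second case the embedding of $\dis^{\zeta_Q}$ into a power of $\dis_{\zeta_Q}\cong\dis([x]_{\zeta_Q})$ to force $p\mid|[x]_{\zeta_Q}|$. The only cosmetic difference is that you invoke Corollary \ref{strucure_of_K_N}(iii) (via superfaithfulness) where the paper uses Lemma \ref{O onto super} together with Lemma \ref{central orbits} directly, and you spell out the easy direction and the ``in particular'' claim that the paper leaves implicit.
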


\begin{proof}

Let assume that $p$ divides $|\dis(Q)|=|\dis(Q/\zeta_Q)||\dis^{\zeta_Q}|$. If $Q$ is abelian we are done, since $|Q|=|\dis(Q)|$. Assume that $Q$ is nilpotent of length $n$. By Lemma \ref{O onto super} we have $\zeta_Q=\mathcal{O}_{\dis_{\zeta_Q}}$. We need to discuss two cases:
\begin{itemize}
\item[(i)]  If $p$ divides $|\dis(Q/\zeta_Q)|$ then, by induction $p$ divides $|Q/\zeta_Q|$ and so it divides also $|Q|=|Q/\zeta_Q||[x]_{\zeta_Q}|$. 
\item[(ii)] If $p$ divides $|\dis^{\zeta_Q}|$ then, according to Lemma \ref{central orbits}, $p$ divides $|[x]|_{\zeta_Q}$ since $\zeta_Q=\mathcal{O}_{\dis_{\zeta_Q}}$. Hence, $p$ divides $|Q|=|Q/\zeta_Q||[x]_{\zeta_Q}|$.\qedhere
\end{itemize}
\end{proof}

\subsection{Central extension}

Let $Q$ be a left quasigroup, $A$ an abelian group, $f\in \aut{A}$, $g\in \End{A}$ and $\theta:Q\times Q\longrightarrow A$ be a map. The algebraic structure $E=(Q\times A, \cdot)$ where
\begin{equation}\label{central ext}
(x,a)\cdot(y,b)=(x\cdot y, g(a)+f(b)+\theta(x,y)).
\end{equation}
is a left quasigroup and it is called a {\it central extension} of $Q$ by $A$. 
The map
$$p_1:E\longrightarrow Q,\quad (x,a)\mapsto x$$
is a morphism of left quasigroups. We denote the left quasigroup $E$ by $\aff(Q,A,g,f,\theta)$. If $|Q|=1$, we can identify $Q\times A$ with $A$ and \eqref{central ext} reads
$$a\cdot b=g(a)+f(b)+c$$
for some $c\in A$. For this special case we use the notation $E=\aff(A,g,f,c)$ and we say that $E$ is an {\it affine left quasigroup} over $A$. Affine left quasigroups are abelian (they are reducts of modules).
\begin{remark}\label{remark on latin}
Note that: 
\begin{itemize}
\item[(i)] $E$ is idempotent if and only if $Q$ is idempotent, $g=1-f$ and $\theta_{x,x}=0$. 
\item[(ii)] $E$ is latin if and only if $Q$ is latin and $g\in \aut{A,+}$. In particular, if $E$ is idempotent then $E$ is latin if and only if $Q$ and $[x]_{\ker{p}}\cong \aff(A,1-f,f,0)$ are latin.
\end{itemize}
\end{remark}

Central extensions are define in the framework of Malt'sev algebraic structures \cite{comm}. In particular, every nilpotent algebraic structure with a Malt'sev term can be constructed by a chain of central extensions (in the case of quandles weaker assumptions lead to a similar result \cite[Section 7]{CP}).

\section{Malt'sev left quasigroups}\label{matlsev section}

Malt'sev classes of left quasigroups have been investigated in \cite{Maltsev_paper}. In particular Malt'sev left quasigroups are superconnected. In this section we extend some results on Malt'sev quandles and Malt'sev varieties of quandles to the setting of idempotent left quasigroups.

\begin{lemma}\label{O onto Maltsev}
Let $Q$ be a Malt'sev left quasigroup. Then $\mathcal{O}_{\dis_\alpha}=\mathcal{O}_{\dis^\alpha}=\alpha$ for every $\alpha\in Con(Q)$.
\end{lemma}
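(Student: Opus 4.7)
\emph{Plan.} The inclusions $\mathcal{O}_{\dis_\alpha}\leq \mathcal{O}_{\dis^\alpha}\leq \alpha$ hold for any congruence of any left quasigroup, so the work is in establishing the reverse inclusion $\alpha\leq \mathcal{O}_{\dis_\alpha}$. I would write $\gamma=\mathcal{O}_{\dis_\alpha}$ and pass to the quotient $Q/\gamma$; a Malt'sev term for $Q$ descends to $Q/\gamma$, so the quotient is again a Malt'sev left quasigroup. Proving $\alpha=\gamma$ is equivalent to proving $\alpha/\gamma=0_{Q/\gamma}$, which is what I would aim for.

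Lemma \ref{lemma below lambda}(ii) gives $\alpha/\gamma\leq \lambda_{Q/\gamma}$, i.e.\ $\alpha/\gamma$ lies below the Cayley kernel of $Q/\gamma$. By the characterization of strongly abelian congruences of left quasigroups recalled just after Remark \ref{remark on lambda} (see \cite{covering_paper}), this exactly says that $\alpha/\gamma$ is a strongly abelian congruence of $Q/\gamma$ in the sense of \cite{TCT}. Thus the whole question reduces to: in a Malt'sev left quasigroup, the only strongly abelian congruence is $0$.

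For this last step I would invoke the classical tame-congruence-theoretic fact that a variety with a Malt'sev term admits no type~$1$ local behavior, and consequently every strongly abelian congruence of any algebra in such a variety is trivial (this is already present in \cite{comm, TCT}). Applied to $Q/\gamma$ it yields $\alpha/\gamma=0_{Q/\gamma}$, hence $\alpha=\gamma=\mathcal{O}_{\dis_\alpha}$; the sandwich then upgrades this to $\mathcal{O}_{\dis^\alpha}=\alpha$ as well.

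The only non-routine input is the tame-congruence-theoretic statement that strongly abelian congruences vanish in a Malt'sev variety, and this is what I expect to be the main obstacle if one wants a fully self-contained argument. A natural alternative would be to show directly that every Malt'sev left quasigroup is faithful (so that $\lambda_{Q/\gamma}=0_{Q/\gamma}$ outright), but this is not transparent from the Malt'sev identities $m(x,y,y)\approx x\approx m(y,y,x)$ alone: the hypothesis $L_x=L_y$ only controls the behavior of $x$ and $y$ as left factors of $\cdot$ and $\backslash$, whereas the Malt'sev term may place its first argument in any position of any subterm, so some non-trivial combinatorial work with $m$ would be needed to circumvent the TCT citation.
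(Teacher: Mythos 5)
Your argument is correct and is essentially the paper's own proof: both pass to $\beta=\mathcal{O}_{\dis_\alpha}$, apply Lemma \ref{lemma below lambda}(ii) to get $\alpha/\beta\leq\lambda_{Q/\beta}$, identify congruences below the Cayley kernel with strongly abelian ones via \cite{covering_paper}, and use that a Malt'sev left quasigroup (the quotient $Q/\beta$ included) has no non-trivial strongly abelian congruence. The tame-congruence-theoretic input you flag as the only non-routine step is exactly the fact the paper also invokes without further proof.
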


\begin{proof}
Let $\alpha\in Con(Q)$ and $\beta=\mathcal{O}_{\dis_\alpha}\leq \mathcal{O}_{\dis^\alpha}\leq \alpha$. The left quasigroup $Q$ is Malt'sev and so it has no non-trivial strongly abelian congruence, so $\dis_\alpha\neq 1$. According to Lemma \ref{factor of lambda}(iii) we have $\alpha/\beta\leq \lambda_{Q/\beta}$, and so $\alpha=\beta$ since also $Q/\beta$ is Malt'sev. Hence, $\alpha=\mathcal{O}_{\dis_\alpha}=\mathcal{O}_{\dis^\alpha}$.
\end{proof}

Using Lemma \ref{O onto Maltsev}, if $Q$ is a finite nilpotent Malt'sev left quasigroup then $\zeta_Q=\mathcal{O}_{\dis_{\zeta_Q}}$ and so Lemma \ref{central orbits} applies to $\zeta_Q$. Hence we can employ the same argument of Proposition \ref{divisors} to prove the following.

\begin{proposition}
Let $Q$ be a finite nilpotent Malt'sev left quasigroup and let $p$ be a prime. Then $p$ divides $|Q|$ if and only if $p$ divides $|\dis(Q)|$. 
In particular, if $|Q|=p^n$ then $\dis(Q)$ is a $p$-group.
\end{proposition}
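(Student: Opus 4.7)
The plan is to mirror the proof of Proposition \ref{divisors}, with Lemma \ref{O onto Maltsev} taking over the role of Lemma \ref{O onto super}. Since Malt'sev left quasigroups are connected by $\dis(Q)$ (as recalled in Section \ref{preliminary}), the orbit-stabilizer identity gives $|\dis(Q)|=|Q|\cdot|\dis(Q)_x|$, which immediately yields the direction $p\mid|Q|\Rightarrow p\mid|\dis(Q)|$.

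For the converse I would induct on the nilpotency length $n$. The base case $n=1$ (so $Q$ abelian) is handled as in Proposition \ref{divisors}: abelian left quasigroups are semiregular by Corollary \ref{corollary from CP}, so together with transitivity of $\dis(Q)$ we get $|\dis(Q)|=|Q|$ and there is nothing to prove. For the inductive step, suppose $p$ divides $|\dis(Q)|=|\dis(Q/\zeta_Q)|\cdot|\dis^{\zeta_Q}|$ (using Lemma \ref{pi}). The decisive observation is that Lemma \ref{O onto Maltsev} applied to $\alpha=\zeta_Q$ gives $\zeta_Q=\mathcal{O}_{\dis_{\zeta_Q}}$, which is precisely the hypothesis needed to feed $\zeta_Q$ into Lemma \ref{central orbits}. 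I would then split into two cases exactly as in Proposition \ref{divisors}: if $p\mid|\dis(Q/\zeta_Q)|$, the induction hypothesis applied to the factor $Q/\zeta_Q$ (which is finite, nilpotent of length $n-1$, and Malt'sev since ``Malt'sev'' is a varietal property preserved under quotients) yields $p\mid|Q/\zeta_Q|$ and therefore $p\mid|Q|$; if $p\mid|\dis^{\zeta_Q}|$, then Lemma \ref{central orbits}(iii) shows that $|\dis^{\zeta_Q}|$ divides $|[x]_{\zeta_Q}|^{|Q/\zeta_Q|}$, whence $p\mid|[x]_{\zeta_Q}|$ and hence $p\mid|Q|=|Q/\zeta_Q|\cdot|[x]_{\zeta_Q}|$.

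The ``in particular'' clause follows at once: if $|Q|=p^n$ and some prime $q$ divided $|\dis(Q)|$, the equivalence just established would force $q\mid p^n$ and therefore $q=p$. I do not anticipate a genuine obstacle, since the entire argument is a direct transcription of the proof of Proposition \ref{divisors}; the only conceptual substitution is Lemma \ref{O onto Maltsev} in place of Lemma \ref{O onto super} to certify the identity $\zeta_Q=\mathcal{O}_{\dis_{\zeta_Q}}$ required by Lemma \ref{central orbits}.
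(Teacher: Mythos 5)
Your proposal is correct and follows exactly the route the paper intends: the paper's proof is literally ``use Lemma \ref{O onto Maltsev} in place of Lemma \ref{O onto super} to get $\zeta_Q=\mathcal{O}_{\dis_{\zeta_Q}}$, then repeat the induction of Proposition \ref{divisors} via Lemma \ref{central orbits}.'' Your additional justifications (orbit--stabilizer for the easy direction, semiregularity of abelian left quasigroups via Corollary \ref{corollary from CP} giving $|\dis(Q)|=|Q|$ in the base case, and preservation of the Malt'sev property under quotients) only make explicit what the paper leaves implicit.
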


Using central extensions, we can extend \cite[Theorem 2.15]{Super}. 
%


%
%


%

\begin{theorem}\label{nilpotent are latin}
Idempotent Malt'sev nilpotent left quasigroups are latin. In particular, idempotent Malt'sev abelian left quasigroups are affine latin quandles.
\end{theorem}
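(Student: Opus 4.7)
My plan is to induct on the nilpotency length $n$ of $Q$, with the ``in particular'' clause (the abelian case, $n=1$) serving as the base of the induction.

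For the base case, let $Q$ be an idempotent Malt'sev abelian left quasigroup. By Corollary \ref{corollary for abelian}(i), $Q$ is a quandle. Since abelian algebras in a Malt'sev variety are affine over a module, there is an abelian group structure $(A,+)$ on $Q$ and $g,f\in \End{A}$, $c\in A$ with $x\cdot y = g(x)+f(y)+c$. Idempotence (Remark \ref{remark on latin}(i)) forces $g=1-f$ and $c=0$, so $Q\cong \aff(A,1-f,f,0)$; since $L_x$ is a permutation, $f\in\Aut{A}$. It then remains to verify that $1-f\in\Aut{A}$. A direct computation gives $L_xL_y^{-1}(z)=z+(1-f)(x-y)$, which identifies $\dis(Q)$ with the group of translations by elements of $\mathrm{Im}(1-f)$; connectedness of $Q$ by $\dis(Q)$ (which holds because $Q$ has a Malt'sev term) then forces $\mathrm{Im}(1-f)=A$. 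For injectivity, $\lambda_Q$ is a congruence (as $Q$ is a quandle) and is strongly abelian, while a Malt'sev variety admits no nontrivial strongly abelian congruence (the key ingredient of Lemma \ref{O onto Maltsev}), so $\lambda_Q=0_Q$ and $Q$ is faithful. But $L_x=L_y$ if and only if $(1-f)(x-y)=0$, hence $\ker{1-f}=0$. Therefore $1-f\in\Aut{A}$ and $Q$ is a latin affine quandle by Remark \ref{remark on latin}(ii).

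For the inductive step with $n\ge 2$, I would set $\zeta = \zeta_Q$. By induction, $Q/\zeta$ is latin, since it is an idempotent Malt'sev left quasigroup of nilpotency length $n-1$. Each block $[x]_\zeta$ is a subalgebra (by idempotence), hence idempotent and Malt'sev; because $\zeta$ is central we have $[\zeta,\zeta]\le[\zeta,1_Q]=0_Q$, so $[x]_\zeta$ is abelian, and the base case endows it with the structure $\aff(A,1-f,f,0)$ of a latin affine quandle. Since $Q$ is a nilpotent algebraic structure with a Malt'sev term, the theory of central extensions invoked just before Section \ref{matlsev section} realizes $Q$ as a central extension $\aff(Q/\zeta,A,g,f,\theta)$ with common fibre $\aff(A,1-f,f,0)$; idempotence forces $g=1-f\in\Aut{A}$, and Remark \ref{remark on latin}(ii) then yields that $Q$ is latin.

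The step I expect to be the main obstacle is representing $Q$ itself, rather than merely a chain of successive quotients, as a single central extension of $Q/\zeta_Q$ by a \emph{common} fibre $A$: this depends on the cited general fact that every nilpotent algebra with a Malt'sev term is obtained by a chain of central extensions and on the observation that in the first layer all $\zeta$-blocks carry isomorphic affine structure. A secondary subtlety in the base case is pinning down the abelian group structure on $Q$ so that $\dis(Q)$ really acts by the translations induced by $1-f$; once this is in hand, connectedness and faithfulness of $Q$ translate transparently into surjectivity and injectivity of $1-f$.
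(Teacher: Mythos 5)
Your proof is correct, and its skeleton --- induction on the nilpotency length, an abelian base case, then realizing $Q$ as a central extension of $Q/\zeta_Q$ with latin fibres and concluding via Remark \ref{remark on latin}(ii) --- is exactly the paper's. The inductive step coincides with the paper's, including its one delicate point: both you and the paper justify writing $Q$ itself as $\aff(Q/\zeta_Q,A,g,f,\theta)$ with a single common fibre only by appeal to the general fact, recalled before Section \ref{matlsev section} from \cite{comm}, that nilpotent algebras with a Malt'sev term are built from chains of central extensions; you are at least as explicit as the paper about this being the load-bearing step. Where you genuinely diverge is the base case: the paper disposes of it by citation (Corollary \ref{corollary for abelian} gives that $Q$ is a quandle, then \cite[Theorem 2.15]{Super} gives latin and \cite{hsv} gives affine), whereas you derive the affine form $\aff(A,1-f,f,0)$ directly from the fundamental theorem of abelian algebras in Malt'sev (congruence modular) varieties and then prove $1-f\in\Aut{A}$ by hand: surjectivity from connectedness by $\dis(Q)$ (\cite[Proposition 3.6]{Maltsev_paper}) and injectivity from faithfulness, which you obtain because $\lambda_Q$ is a congruence of the quandle $Q$, is strongly abelian, and a Malt'sev variety has no nontrivial strongly abelian congruence --- the same ingredient used in Lemma \ref{O onto Maltsev}. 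This buys a self-contained treatment that does not lean on the external quandle results, at the cost of invoking the general affine representation theorem; the paper's version is shorter but outsources the base case. A small side remark: the paper's proof cites Corollary \ref{corollary for abelian}(ii) at that point, but the item actually needed is (i), which is the one you use.
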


\begin{proof}
According to Corollary \ref{corollary for abelian}(ii), if $Q$ is an idempotent abelian left quasigroup, then $Q$ is a quandle. Hence $Q$ is latin according to \cite[Theorem 2.15]{Super} and so affine (connected abelian quandles are affine, see \cite[Section 7]{hsv}).

Assume that $Q$ is nilpotent of length $n$, then $Q$ is a central extension of $Q/\zeta_Q$. Then $Q/\zeta_Q$ is latin by induction on the nilpotency length and the blocks of $\zeta_Q$ are latin, since they are abelian. Thus, we can conclude that $Q$ is latin by Remark \ref{remark on latin}(ii).
%
%
\end{proof}

Let us show a characterization of finite nilpotent idempotent left quasigroups with a Malt'sev term.

%
%
%
%

\begin{proposition}\label{nilpotent latin}
	Let $Q$ be a finite nilpotent idempotent left quasigroup. The following are equivalent:
\begin{itemize}
\item[(i)] $Q$ is connected and $Fix(L_x)=\{x\}$ for every $x\in Q$.
\item[(ii)] $Q$ is superconnected. 
\item[(iii)] $Q$ is Malt'sev.
\item[(iv)] $Q$ is latin. 
\end{itemize}	
\end{proposition}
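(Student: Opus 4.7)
The plan is to prove the cycle $(\mathrm{iv})\Rightarrow(\mathrm{iii})\Rightarrow(\mathrm{ii})\Rightarrow(\mathrm{i})\Rightarrow(\mathrm{iv})$. Three of the arrows are quick: for $(\mathrm{iv})\Rightarrow(\mathrm{iii})$, a finite latin left quasigroup is a quasigroup and therefore belongs to a Mal'tsev variety; $(\mathrm{iii})\Rightarrow(\mathrm{ii})$ is the observation opening Section \ref{matlsev section}; and for $(\mathrm{ii})\Rightarrow(\mathrm{i})$ superconnectedness forces both connectedness and $\mathcal{P}_2\notin\mathcal{S}(Q)$ (the latter because $\mathcal{P}_2$ is not connected while superconnectedness passes to subalgebras), so Lemma \ref{lemma on super}(iii) yields that $Q$ is superfaithful. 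The Fix condition of (i) is then recovered a posteriori at the end of the cycle from $(\mathrm{iv})\Rightarrow(\mathrm{i})$ via Corollary \ref{P_2 and idempotent}(i).

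The heart of the argument is $(\mathrm{i})\Rightarrow(\mathrm{iv})$, which I would prove by induction on the nilpotency length $n$ of $Q$. For $n=1$ the algebra $Q$ is abelian; Corollary \ref{P_2 and idempotent}(ii) turns the Fix condition into superfaithfulness, hence faithfulness, and Corollary \ref{corollary for abelian}(ii) then delivers latinness.

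For the inductive step set $\alpha=\zeta_Q$. The factor $Q/\alpha$ is a connected idempotent left quasigroup, nilpotent of length $n-1$, and by Lemma \ref{proj sub} still satisfies $Fix(L_{[x]})=\{[x]\}$; the induction hypothesis therefore gives that $Q/\alpha$ is latin. To lift latinness to $Q$, suppose $xz=yz$. Projecting to $Q/\alpha$ yields $x\,\alpha\,y$, so $L_xL_y^{-1}\in\dis_\alpha$ by the definition of the relative displacement group, and normality of $\dis_\alpha$ in $\lmlt(Q)$ gives $L_y^{-1}L_x=L_x^{-1}(L_xL_y^{-1})L_x\in\dis_\alpha$. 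The equality $L_x(z)=L_y(z)$ puts this element in $(\dis_\alpha)_z$. Because $\alpha=\zeta_Q$ is central, Corollary \ref{corollary from CP} places $\dis_\alpha$ inside $Z(\dis(Q))$; since $\dis(Q)$ acts transitively on the connected $Q$, any central subgroup is semiregular and hence $(\dis_\alpha)_z=1$. Therefore $L_x=L_y$, and faithfulness of $Q$ forces $x=y$. Thus $R_z$ is injective and, by finiteness, bijective, so $Q$ is latin.

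The main obstacle is precisely this lifting step. Everything hinges on two features of the central displacement group $\dis_{\zeta_Q}$ working in tandem: its normality in $\lmlt(Q)$, which converts the natural witness $L_xL_y^{-1}$ into the stabilizer witness $L_y^{-1}L_x$ within the same subgroup, and its centrality in the transitive group $\dis(Q)$, which via semiregularity forces triviality of the resulting stabilizer element. Without either half the induction breaks down, and the strategy collapses to the weaker structural statement that only the blocks of $\zeta_Q$ are affine latin (which is Corollary \ref{strucure_of_K_N_2}).
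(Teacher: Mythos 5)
Your central implication (i) $\Rightarrow$ (iv) is correct and is genuinely different from the paper's route: the paper proves (i) $\Rightarrow$ (ii) by induction (Lemma \ref{proj sub} together with Corollary \ref{strucure_of_K_N_2}(ii)) and then reaches (iv) through (ii) $\Rightarrow$ (iii) (\cite[Corollary 3.8]{Maltsev_paper}) and (iii) $\Rightarrow$ (iv) (Theorem \ref{nilpotent are latin}), whereas you lift latinness directly along $\zeta_Q$ using normality and centrality of $\dis_{\zeta_Q}$. The problem is that your cycle does not close. You announce (iv) $\Rightarrow$ (iii) $\Rightarrow$ (ii) $\Rightarrow$ (i) $\Rightarrow$ (iv), but from (ii) you only extract connectedness and superfaithfulness; superfaithfulness (implication \eqref{super implication}) is strictly weaker than $Fix(L_x)=\{x\}$ (implication \eqref{Fix}), and your plan to recover the Fix condition ``a posteriori from (iv) $\Rightarrow$ (i)'' is circular: in your scheme the only way to get from (ii) to (iv) is through (i), which is exactly the step left unproved. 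What you have actually established is (i) $\Leftrightarrow$ (iv) $\Rightarrow$ (iii) $\Rightarrow$ (ii), with no arrow from (ii) or (iii) back to (i) or (iv). Nor can the Fix hypothesis be silently weakened in your induction: the inductive step needs Lemma \ref{proj sub} (which takes \eqref{Fix} as input) precisely because faithfulness alone does not pass to the quotient $Q/\zeta_Q$, and the base case uses Corollary \ref{P_2 and idempotent}(ii).

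The gap is repairable with your own argument: run the same induction taking (ii) as the hypothesis. Superconnectedness is preserved by homomorphic images, so $Q/\zeta_Q$ is again superconnected and, being nilpotent of smaller length, latin by induction; superconnectedness gives $\mathcal{P}_2\notin\mathcal{S}(Q)$ and hence faithfulness via Lemma \ref{lemma on super}(iii); and your lifting step only uses faithfulness, connectedness by $\dis(Q)$ and centrality of $\dis_{\zeta_Q}$. This yields (ii) $\Rightarrow$ (iv) directly and, combined with your arrows (iv) $\Rightarrow$ (i), (iv) $\Rightarrow$ (iii), (iii) $\Rightarrow$ (ii), closes the equivalence. (Alternatively, prove (i) $\Rightarrow$ (ii) as the paper does, or quote \cite[Corollary 3.8]{Maltsev_paper} and Theorem \ref{nilpotent are latin} for (ii) $\Rightarrow$ (iii) $\Rightarrow$ (iv).) One more small point: your one-line (iv) $\Rightarrow$ (iii) needs finiteness, since the right division $x/y$ must be a term operation (a power of $R_y$) for the quasigroup Malt'sev term to live in the variety generated by $Q$; this is fine here because $Q$ is finite, but it should be said.
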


\begin{proof}

The implication (iv) $\Rightarrow$ (i) is true in general (latin left quasigroup are connected and Corollary \ref{P_2 and idempotent}(i) applies).

(i) $\Rightarrow$ (ii) Note that $Q$ is faithful. Let us proceed by induction on the nilpotency length. If $Q$ is abelian then $Q$ is latin by Corollary \ref{corollary for abelian}(ii) and therefore $Q$ is superconnected. Let $Q$ be nilpotent of length $n$. By Lemma \ref{proj sub}, $Q/\zeta_Q$ satisfies the hypotesis and then by induction $Q/\zeta_Q$ is superconnected. Hence, we can conclude by Corollary \ref{strucure_of_K_N_2}(ii).
%

(ii) $\Rightarrow$ (iii) According to \cite[Corollary 3.8]{Maltsev_paper}, $Q$ has a Malcev term.

(iii) $\Rightarrow$ (iv) We can apply Theorem \ref{nilpotent are latin}.
\end{proof}


The key fact in the sequel of the section is the following: idempotent semiregular left quasigroups are quandles (see Lemma \ref{semiregular idempotent are quandles}), and so in particular abelian idempotent left quasigroups are quandles (see Corollary \ref{corollary for abelian}(i)).

According to \cite[Theorem 4.21]{Maltsev_paper} the class of semiregular quandles of a Malt'sev variety of quandles is a subvariety. A direct consequence of such a theorem and the fact above is the following.

\begin{corollary}\label{maltsev semiregular}
Let $\mathcal{V}$ be a Malt'sev variety of idempotent left quasigroup. The class of semiregular left quasigroups of $\mathcal{V}$ is a subvariety of quandles of $\mathcal{V}$.
\end{corollary}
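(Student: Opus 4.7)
The plan is to reduce the statement to the known result \cite[Theorem 4.21]{Maltsev_paper} on Malt'sev varieties of quandles, using Lemma \ref{semiregular idempotent are quandles} as the bridge between idempotent left quasigroups and quandles. First I would observe that, within any variety of idempotent left quasigroups, the rack identity $x(yz)\approx (xy)(xz)$ defines a subvariety, whose members are precisely the quandles. Applying this to $\mathcal{V}$, let $\mathcal{W}$ denote the subvariety of $\mathcal{V}$ of all quandles in $\mathcal{V}$. Since $\mathcal{V}$ is a Malt'sev variety and the Malt'sev identities $m(x,y,y)\approx x\approx m(y,y,x)$ persist under passage to any subvariety, the variety $\mathcal{W}$ is a Malt'sev variety of quandles.

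Next I would identify the class $\mathcal{S}$ of semiregular members of $\mathcal{V}$ with the class of semiregular members of $\mathcal{W}$. By Lemma \ref{semiregular idempotent are quandles} every semiregular idempotent left quasigroup is a quandle, so $\mathcal{S}\subseteq \mathcal{W}$; conversely, every semiregular quandle in $\mathcal{W}$ is certainly a semiregular left quasigroup in $\mathcal{V}$. Hence the two classes coincide.

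Finally, since $\mathcal{W}$ is a Malt'sev variety of quandles, \cite[Theorem 4.21]{Maltsev_paper} tells us that its semiregular members form a subvariety of $\mathcal{W}$. Combining with the identification above, $\mathcal{S}$ is a subvariety of $\mathcal{W}$ and therefore a subvariety of $\mathcal{V}$ consisting entirely of quandles, which is exactly the conclusion. There is no real obstacle here: once Lemma \ref{semiregular idempotent are quandles} upgrades semiregular idempotent left quasigroups to quandles, the argument is a routine bookkeeping reduction to the quandle case, with all the substantive work absorbed into the cited theorem.
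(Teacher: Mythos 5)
Your proposal is correct and follows essentially the same route as the paper: the paper likewise combines Lemma \ref{semiregular idempotent are quandles} (semiregular idempotent left quasigroups are quandles) with \cite[Theorem 4.21]{Maltsev_paper} applied to the quandles of $\mathcal{V}$, treating the corollary as a direct consequence. Your additional bookkeeping (that the quandles of $\mathcal{V}$ form a Malt'sev subvariety and that the semiregular members of $\mathcal{V}$ coincide with those of that subvariety) just makes explicit what the paper leaves implicit.
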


A variety $\mathcal{V}$ is said to be {\it congruence distributive} if the congruence lattice of all the algebraic structures in $\mathcal{V}$ is distributive. We can extend the characterization of distributive varieties of quandles to varieties of idempotent left quasigroups.
\begin{proposition}
Let $\mathcal{V}$ be a variety of idempotent left quasigroup. The following are equivalent:
\begin{itemize}
\item[(i)] $\mathcal{V}$ contains an abelian left quasigroup.
\item[(ii)] $\mathcal{V}$ contains an abelian quandle.
\item[(iii)] $\mathcal{V}$ contains a finite quandle.
\end{itemize}

\end{proposition}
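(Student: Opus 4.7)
The equivalence (i) $\Leftrightarrow$ (ii) is essentially immediate: every quandle is a left quasigroup, and conversely any abelian left quasigroup in $\mathcal{V}$ is idempotent --- since $\mathcal{V}$ is a variety of idempotent left quasigroups --- and hence a quandle by Corollary \ref{corollary for abelian}(i). So the substantive content lies entirely in (ii) $\Leftrightarrow$ (iii).

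The cleanest strategy I see is to reduce to the known characterization for varieties of quandles. Let $\mathcal{W} = \mathcal{V} \cap \mathcal{Q}$, where $\mathcal{Q}$ denotes the variety of quandles; as the intersection of two varieties, $\mathcal{W}$ is itself a variety of quandles contained in $\mathcal{V}$. Since any abelian quandle or any finite quandle belonging to $\mathcal{V}$ automatically lies in $\mathcal{W}$, the conditions (ii) and (iii) are unchanged upon replacing $\mathcal{V}$ with $\mathcal{W}$. The proposition therefore reduces to the same equivalence for an arbitrary variety of quandles, which is precisely the characterization of distributive varieties of quandles from \cite{Maltsev_paper} (the reference cited in the introduction as the source of the quandle-theoretic result being generalized).

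If one instead preferred a self-contained argument, the direction (ii) $\Rightarrow$ (iii) could be handled by analyzing the structure of an abelian quandle $Q \in \mathcal{V}$: either $Q$ is a projection quandle, in which case $\mathcal{P}_2$ appears as a $2$-element subalgebra and gives a finite quandle in $\mathcal{V}$, or $Q$ is affine of the form $\aff(A, 1-f, f, 0)$ as in Theorem \ref{nilpotent are latin}, in which case quotienting $A$ by a maximal ideal of the Jacobson ring $\mathbb{Z}[t, t^{-1}]$ (whose residue field is necessarily finite) produces a finite affine quandle in $\mathcal{V}$. The converse (iii) $\Rightarrow$ (ii) is more delicate: given a finite quandle $Q \in \mathcal{V}$, one passes to a minimal non-trivial subquandle $S$, which is simple; if $S$ is abelian we are done, and otherwise one must locate an abelian quandle inside the variety generated by a simple non-abelian finite quandle, using commutator-theoretic tools such as the Cayley kernel or abelian congruences of powers $S^n$. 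I expect this last step --- extracting an abelian quandle from the variety generated by a simple non-abelian one --- to be the main obstacle, and it is precisely what \cite{Maltsev_paper} addresses in the quandle setting.
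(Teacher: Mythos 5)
Your main argument is correct and is essentially the paper's proof: the paper likewise obtains (i)$\Rightarrow$(ii) from Corollary \ref{corollary for abelian} and settles (ii)$\Leftrightarrow$(iii) by citing the quandle-theoretic result of \cite{Maltsev_paper} (its Theorem 4.22), so your passage through $\mathcal{W}=\mathcal{V}\cap\mathcal{Q}$ is only a cosmetic repackaging of that same citation. The appended ``self-contained'' sketch is not needed and, as you yourself note, incomplete (for instance, an abelian quandle need not be projection or affine unless it is connected), but since your proof does not rely on it, the argument stands.
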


\begin{proof}
(i) $\Rightarrow$ (ii) Idempotent abelian left quasigroups are quandles, see Corollary \ref{corollary for abelian}(ii).

(ii) $\Rightarrow$ (iii) Let $Q\in \mathcal{V}$ be an abelian quandle. According to \cite[Theorem 4.22]{Maltsev_paper} the variety generated by $Q$ contains a finite quandle.

(iii) $\Rightarrow$ (i) Let $Q\in \mathcal{V}$ be a finite quandle. Then the variety generated by $Q$ contains an abelian quandle according to \cite[Theorem 4.22]{Maltsev_paper}.
\end{proof}

\begin{corollary}\label{distributive}
Let $\mathcal{V}$ be a variety of idempotent left quasigroups. The
following are equivalent:
\begin{itemize}
\item[(i)] $\mathcal{V}$ is congruence distributive.
\item[(ii)] $\mathcal{V}$ does not contain a finite quandle. 
\end{itemize}
\end{corollary}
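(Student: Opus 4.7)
The plan is to combine the preceding proposition---which identifies ``$\mathcal{V}$ contains a finite quandle'' with ``$\mathcal{V}$ contains a non-trivial abelian left quasigroup''---with the commutator-theoretic characterization of congruence distributivity and the analogous corollary for varieties of quandles from \cite{Maltsev_paper}.

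For (i) $\Rightarrow$ (ii), I would use the standard fact that in a congruence-distributive variety the commutator of any pair of congruences coincides with their meet. Consequently, any abelian $A \in \mathcal{V}$ satisfies $1_A = 1_A \wedge 1_A = [1_A,1_A] = 0_A$, forcing $A$ to be a one-element algebra. Hence $\mathcal{V}$ contains no non-trivial abelian left quasigroup, and the preceding proposition rules out the existence of a finite quandle in $\mathcal{V}$.

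For (ii) $\Rightarrow$ (i), I would argue by contraposition: assuming that $\mathcal{V}$ is not congruence-distributive, the goal is to exhibit a finite quandle in $\mathcal{V}$. By the preceding proposition it suffices to produce a non-trivial abelian member, and by Corollary \ref{corollary for abelian}(i) any such member is automatically a quandle. Here I would invoke the analogous corollary for varieties of quandles in \cite{Maltsev_paper}: failure of congruence distributivity in a variety of quandles forces the presence of an abelian (in fact affine) quandle. The passage from ``idempotent left quasigroup'' to ``quandle'' is handled by Corollary \ref{corollary for abelian}(i), which guarantees that the abelian witness we are looking for lives inside the quandle sub-class of $\mathcal{V}$.

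The main obstacle is this second direction: the implication ``not congruence-distributive $\Rightarrow$ contains a non-trivial abelian algebra'' is not a formal property of general varieties, and its justification relies on the specific idempotent-left-quasigroup structure via the quandle-analogue of this corollary in \cite{Maltsev_paper}. Once that input is available, the remainder of the argument reduces to routine bookkeeping through the preceding proposition.
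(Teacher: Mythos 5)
The (i) $\Rightarrow$ (ii) half of your argument is correct and matches the spirit of the paper, which states the corollary without proof as an immediate consequence of the preceding proposition and of the quandle results of \cite{Maltsev_paper}: congruence distributive varieties are neutral, i.e. $[\alpha,\beta]=\alpha\wedge\beta$, so every abelian member of $\mathcal{V}$ is trivial, and the proposition then excludes finite quandles (here, as in the statement itself, ``finite quandle'' and ``abelian left quasigroup'' must be read as nontrivial ones, since every variety contains the one-element quandle). Alternatively, one can observe that the quandles of $\mathcal{V}$ form a subvariety, which inherits distributivity, and apply the quandle characterization of \cite{Maltsev_paper} there, since any finite quandle of $\mathcal{V}$ lies in that subvariety.

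The (ii) $\Rightarrow$ (i) half has a genuine gap. Arguing by contraposition you assume $\mathcal{V}$ is not congruence distributive and invoke the analogue of the corollary for varieties of quandles to produce an abelian member. But the hypothesis of that analogue is a non-distributive variety of \emph{quandles}, whereas what you have is a non-distributive variety of idempotent left quasigroups: the algebra whose congruence lattice witnesses the failure of distributivity need not be a quandle, and nothing in your argument shows that the quandle subvariety of $\mathcal{V}$ also fails distributivity -- it may be congruence distributive while $\mathcal{V}$ is not, since J\'onsson terms for the quandle subvariety only need to satisfy the J\'onsson identities modulo the quandle axioms and do not transfer to $\mathcal{V}$. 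So the quandle-variety result cannot be applied as stated. What is actually needed is the implication, at the level of idempotent left quasigroups, that a variety with no nontrivial abelian member (equivalently, by the preceding proposition, with no finite quandle) is congruence distributive; this is not a formal property of idempotent varieties -- the variety of semilattices is idempotent, contains no nontrivial abelian algebra, and is not congruence distributive -- so it requires left-quasigroup-specific input, which is precisely the content being extended from \cite{Maltsev_paper} and which your proposal does not supply.
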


\bibliographystyle{amsalpha}
\bibliography{references} 

\end{document}